\theoremstyle{plain}
\newtheorem {lemma}{Lemma}
\newtheorem {proposition}[lemma]{Proposition}
\newtheorem {theorem}[lemma]{Theorem}
\theoremstyle{definition}
\newtheorem{definition}[lemma]{Definition}
\newtheorem {example}[lemma]{Example}
\newcommand{\N}{\mathbb{N}}
\newcommand{\Z}{\mathbb{Z}}
\newcommand{\tr}{\operatorname{tr}}
\newcommand{\GL}{\operatorname{GL}}
\newcommand{\SL}{\operatorname{SL}}
\newcommand{\E}{\operatorname{E}}
\newcommand{\C}{\operatorname{C}}
\newcommand{\Mat}{\operatorname{M}}
\newcommand{\F}{\mathbb{F}}
\title{On products of conjugacy classes in general linear groups}
\author{Raimund Preusser}
\date{}
\begin{document}
\maketitle
\begin{abstract}
\noindent
Let $K$ be a field and $n\geq 3$. Let $\E_n(K)\leq H\leq \GL_n(K)$ be an intermediate group and $C$ a noncentral $H$-class. Define $m(C)$ as the minimal positive integer $m$ such that $\exists i_1,\dots,i_m\in\{\pm 1\}$ such that the product $C^{i_1}\dots C^{i_m}$ contains all nontrivial elementary transvections. In this article we obtain a sharp upper bound for $m(C)$. Moreover, we determine $m(C)$ for any noncentral $H$-class $C$ under the assumption that $K$ is algebraically closed or $n=3$ or $n=\infty$.
\end{abstract}
\let\thefootnote\relax\footnotetext{{\it 2010 Mathematics Subject Classification.} 15A24, 20G15.}
\let\thefootnote\relax\footnotetext{{\it Keywords and phrases.} general linear groups, conjugacy classes, matrix identities.}
\let\thefootnote\relax\footnotetext{The work is supported by the Russian Science Foundation grant 19-71-30002.}

\section{Introduction}
In 1964, H.\ Bass \cite{bass} showed that if $R$ is an (associative, unital) ring then 
\begin{equation}
\forall H\leq\GL_n(R):H^{\E_n(R)}\subseteq H\Leftrightarrow \exists I\lhd R:\E_n(R,I)\leq H\leq \C_n(R,I)
\end{equation}
provided $n$ is large enough with respect to the stable rank of $R$. Here $\E_n(R)$ denotes the elementary subgroup, $\E_n(R,I)$ the relative elementary subgroup of level $I$ and $\C_n(R,I)$ the full congruence subgroup of level $I$. Bass's result is known as {\it Sandwich Classification Theorem}. In the 1970's and 80's, the validity of this theorem was extended by J.\ Wilson \cite{wilson}, I.\ Golubchik \cite{golubchik}, L.\ Vaserstein \cite{vaserstein, vaserstein_ban, vaserstein_neum} and others. Statement (1) holds true for example if $R$ is a commutative ring and $n\geq 3$.

It follows from the Sandwich Classification Theorem that if $\sigma$ is a matrix in $\GL_n(R)$ where $R$ is a commutative ring and $n\geq 3$, then each of the elementary transvections $t_{kl}(\sigma_{ij}), t_{kl}(\sigma_{ii}-\sigma_{jj})~(i\neq j,k\neq l)$ can be expressed as a finite product of $\E_n(R)$-conjugates of $\sigma$ and $\sigma^{-1}$. 
In 1960, J.\ Brenner \cite{brenner} showed that in the case $R=\Z$ there is a bound for the number of $\E_n(R)$-conjugates needed for such an expression. In 2018, the author \cite{preusser2} proved that indeed for \textit{any} commutative ring there is a bound for the number of $\E_n(R)$-conjugates needed for such an expression. In 2020, the author \cite{preusser11} obtained bounds for different classes of noncommutative rings.

Now consider the case that $R=K$ is a field and $n\geq 3$. It follows from the Sandwich Classification Theorem that for any noncentral $\sigma\in \GL_n(K)$ the elementary transvection $t_{12}(1)$ can be expressed as a finite product of $\E_n(K)$-conjugates of $\sigma$ and $\sigma^{-1}$. Rephrased, for any noncentral $\E_n(K)$-class $C$ (see beginning of Section 2) there is an $m\in\N$ and $i_1,\dots,i_m\in\{\pm 1\}$ such that $T\subseteq C^{i_1}\dots C^{i_m}$ where $T$ is the $\E_n(K)$-class of $t_{12}(1)$. What is the smallest $m$ with this property? This paper aims to answer this question. More generally, if $\E_n(K)\leq H\leq \GL_n(K)$ is an intermediate group and $C$ a noncentral $H$-class, we define
\[m(C):=\min\{m\in\N\mid \exists i_1,\dots,i_m\in\{\pm 1\}: T\subseteq C^{i_1}\dots C^{i_m}\}
\]
where $T$ denotes the $H$-class of $t_{12}(1)$. The Sandwich Classification Theorem implies that the minimum in the definition of $m(C)$ exist. The goal of this paper is to compute $m(C)$.

The rest of the paper is organised as follows. In Section 2 we recall some standard notation which is used throughout the paper. Moreover, we recall the Frobenius form and the generalised Jordan form of a square matrix and some basic definitions regarding general linear groups. In Section 3 we prove that $m(C)\leq 4$ for any noncentral $H$-class $C$ and that this bound is sharp (see Proposition \ref{propfour} and Example \ref{exeisen}). Moreover, we show that if $K$ is algebraically closed, then $m(C)\leq 2$. In Section 4 we compute $m(C)$ in the case $n=3$, the main result of this section is Theorem \ref{thmn=3}. In Section 5 we consider the case $n=\infty$. 

\section{Preliminaries}
If $G$ is a group and $g,h\in G$, we let $g^h:=h^{-1}gh$ and $[g,h]:=ghg^{-1}h^{-1}$. Let $H$ be a subgroup of $G$ and $g,g'\in G$. If there is an $h\in H$ such that $g^h=g'$, we write $g\sim_H g'$ (or just $g\sim g'$ if $H=G$). Clearly $\sim_H$ is an equivalence relation on $G$. We call the $\sim_H$-equivalence class of an element $g\in G$ the \textit{$H$-class} of $g$ and denote it by $g^{H}$. For an $H$-class $C=g^H$ we define $C^1:=C$ and $C^{-1}:=(g^{-1})^H$.

Throughout the paper $K$ denotes a field and $K^*$ the set of all nonzero elements of $K$. $\N$ denotes the set of positive integers.
For any $m,n\in\N$, the set of all $m\times n$ matrices over $K$ is denoted by $\Mat_{m\times n}(K)$. Instead of $\Mat_{n\times n}(K)$ we may write $\Mat_n(K)$. The identity matrix in $\Mat_n(K)$ is denoted by $e$ or $e_{n\times n}$ and the matrix with a one at position $(i,j)$ and zeros elsewhere is denoted by $e^{ij}$. 

\subsection{Normal forms for matrices}
In this subsection $n$ denotes a positive integer. We recall the Frobenius form and the generalised Jordan form of a matrix in $\Mat_n(K)$. 

\begin{definition}
Let $P=X^n+a_{n-1}X^{n-1}+\dots+a_1X+a_0\in K[X]$ be a monic polynomial of degree $n$. The \textit{companion matrix} of $P$ is the matrix
\[[P]=\begin{pmatrix}&&&-a_0\\1&&&-a_1\\&\ddots&&\vdots\\&&1&-a_{n-1}\end{pmatrix}\in \Mat_{n}(K).\]
\end{definition} 

\begin{definition}
Let $\sigma\in \Mat_m(K)$ and $\tau\in \Mat_n(K)$. The \textit{direct sum} of $\sigma$ and $\tau$ is the matrix
\[\sigma\oplus\tau=\begin{pmatrix}\sigma&\\&\tau\end{pmatrix}\in \Mat_{m+n}(K).\]
\end{definition}

\begin{definition}
Let $\sigma,\tau\in \Mat_n(K)$. If there is an invertible $\rho\in\Mat_n(K)$ such that $\sigma=\rho\tau\rho^{-1}$, then we write $\sigma\sim\tau$ and call $\sigma$ and $\tau$ \textit{similar}.
\end{definition}

\begin{theorem}\label{thmfro}
Let $\sigma\in\Mat_n(K)$. Then there are uniquely determined nonconstant, monic polynomials $P_1,\dots,P_r\in K[X]$ such that $P_1|P_2|\dots|P_r$ and $\sigma \sim [P_1]\oplus\dots\oplus [P_r]$.
\end{theorem}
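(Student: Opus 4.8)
The plan is to pass from matrices to modules. Make $K^n$ into a module over the polynomial ring $K[X]$ by letting $X$ act as $\sigma$, that is, $f(X)\cdot v:=f(\sigma)v$ for $f\in K[X]$ and $v\in K^n$. Since by Cayley--Hamilton the characteristic polynomial of $\sigma$ annihilates $\sigma$, every element of $K^n$ is torsion, so $K^n$ is a finitely generated torsion module over the principal ideal domain $K[X]$. The whole theorem is then a translation of the structure theory of such modules back into the language of matrix similarity.

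First I would invoke the structure theorem for finitely generated modules over a PID in its invariant-factor form: there are uniquely determined nonconstant monic polynomials $P_1\mid P_2\mid\dots\mid P_r$ with
\[
K^n\cong K[X]/(P_1)\oplus\dots\oplus K[X]/(P_r)
\]
as $K[X]$-modules, the monic normalisation removing the unit ambiguity inherent in a PID. Next I would identify each summand with a companion matrix. Fixing $d:=\deg P_i$ and writing $P_i=X^d+a_{d-1}X^{d-1}+\dots+a_0$, the residues $1,X,\dots,X^{d-1}$ form a $K$-basis of $K[X]/(P_i)$, and multiplication by $X$ sends $X^j\mapsto X^{j+1}$ for $j<d-1$ and $X^{d-1}\mapsto -a_0-a_1X-\dots-a_{d-1}X^{d-1}$. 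The matrix of this map in the chosen basis is exactly the companion matrix $[P_i]$. Hence the action of $X$ on the whole module, expressed in the concatenated basis, is $[P_1]\oplus\dots\oplus[P_r]$; since this action is by construction $\sigma$, we conclude $\sigma\sim[P_1]\oplus\dots\oplus[P_r]$.

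The main obstacle is the uniqueness assertion rather than existence: existence is immediate once the module decomposition is in hand, whereas uniqueness requires knowing that the similarity class of $\sigma$ determines the isomorphism type of the $K[X]$-module $K^n$ (clear) and, conversely, that the invariant factors are a genuine isomorphism invariant. For a self-contained argument I would develop the Smith normal form of the characteristic matrix $Xe-\sigma\in\Mat_n(K[X])$: one shows that $\sigma\sim\tau$ in $\Mat_n(K)$ if and only if $Xe-\sigma$ and $Xe-\tau$ are equivalent over $K[X]$, and that equivalence classes over the PID $K[X]$ are classified by the nonzero diagonal entries of the Smith form, whose nonconstant members are precisely the $P_i$. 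The uniqueness of these entries I would pin down via the $k$-th determinantal divisor, the gcd of all $k\times k$ minors, which is manifestly unchanged under left and right multiplication by invertible matrices over $K[X]$ and hence is an invariant of the equivalence class. Alternatively, the entire uniqueness step may simply be quoted from the structure theorem over a PID, which is the route I would take given that this theorem is only being recalled as background for the later sections.
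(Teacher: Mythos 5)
Your proof is correct and coincides with the approach of the paper's reference: the paper gives no argument of its own, simply citing \cite[Part V, Chapter 21, Theorem 4.4]{bjn}, where the theorem is established precisely along your lines --- the $K[X]$-module structure on $K^n$, the invariant-factor form of the structure theorem over a PID for existence, and the Smith normal form of the characteristic matrix $Xe-\sigma$ (via determinantal divisors) for uniqueness, which is the same Smith-normal-form description of the invariant factors that the paper itself recalls immediately after the theorem. There is nothing to fix.
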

\begin{proof}
See \cite[Part V, Chapter 21, Theorem 4.4]{bjn}.
\end{proof}

The matrix $[P_1]\oplus\dots\oplus [P_r]$ in Theorem \ref{thmfro} is called the \textit{Frobenius form} or \textit{rational canonical form} of $\sigma$. We denote it by $F(\sigma)$. The polynomials $P_1,\dots,P_r$ are called the \textit{invariant factors} of $\sigma$.

Recall that the \textit{characteristic matrix} of a matrix $\sigma\in\Mat_n(K)$ is the matrix $Xe-\sigma\in \Mat_n(K[X])$. The \textit{characteristic polynomial} of $\sigma$ is the polynomial $\chi_\sigma=\det(Xe-\sigma)$. Note that the characteristic polynomial of a companion matrix $[P]$ is $P$. The invariant factors of a matrix $\sigma\in\Mat_n(K)$ are precisely the monic polynomials associated with the nonconstant polynomials in the Smith normal form of the characteristic matrix $Xe-\sigma$. The Smith normal form of $Xe-\sigma$ can be computed using the algorithm described in \cite[Part V, Chapter 20, Proof of Theorem 3.2]{bjn}.

Let $\sigma\in\Mat_n(K)$ and $P_1,\dots,P_r$ the invariant factors of $\sigma$. For each $1\leq i\leq r$ we can write $P_i=\prod\limits_{j=1}^{s_i}P_{ij}^{q_{ij}}$ where  $P_{i1},\dots,P_{is_i}\in K[X]$ are pairwise distinct irreducible, monic polynomials and $q_{i1},\dots,q_{is_i}\geq 1$. This factorisation is unique up to the order of the factors. The polynomials $P_{ij}^{q_{ij}}~(1\leq i\leq r, 1\leq j \leq s_i)$ form the system of \textit{elementary divisors} of $\sigma$. Note that $P_{ij}^{q_{ij}}$ maybe equal to $P_{i'j'}^{q_{i'j'}}$ if $i\neq i'$.

In order to define the generalised Jordan form of a matrix, we need the notion of the generalised Jordan block of a power $P^q$ of an irreducible polynomial $P$. For polynomials $P$ of degree $1$ one gets back the usual Jordan blocks.

\begin{definition}
Let $q\in\N$ and $P\in K[X]$ an irreducible, monic polynomial of degree $n$. The \textit{generalised Jordan block} corresponding to $P^q$ is the matrix 
\[J(P^q)=\begin{pmatrix}[P]&&&\\\xi&[P]&&\\&\ddots&\ddots&\\&&\xi&[P]\end{pmatrix}\in\Mat_{qn}(K)\]
where $\xi\in \Mat_n(K)$ is the matrix that has a $1$ at position $(1,n)$ and zeros elsewhere.
\end{definition}

\begin{theorem}\label{thmjor}
Let $\sigma\in\Mat_n(K)$ and $(P_{i}^{q_{i}})_{i\in\Phi}$ the system of elementary divisors of $\sigma$. Then $\sigma\sim \bigoplus_{i\in\Phi}J(P_i^{q_{i}})$ where the order of direct summands maybe arbitrary.
\end{theorem}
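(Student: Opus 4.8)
The plan is to prove Theorem \ref{thmjor}, which states that every matrix is similar to a direct sum of generalised Jordan blocks, by reducing it to the already-established Frobenius form (Theorem \ref{thmfro}). The essential observation is that the generalised Jordan form is just a ``refinement'' of the rational canonical form: instead of grouping elementary divisors into the invariant factors $P_1,\dots,P_r$, we split each invariant factor into its prime-power constituents. So the proof has two logical halves. First I would show that $[P_i]$, the companion matrix of a single invariant factor $P_i=\prod_{j=1}^{s_i}P_{ij}^{q_{ij}}$, is similar to $\bigoplus_{j=1}^{s_i}J(P_{ij}^{q_{ij}})$. Second I would show that $J(P^q)$ (for a prime power $P^q$) is similar to the companion matrix $[P^q]$, so that the notions line up correctly with Theorem \ref{thmfro}. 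Combining these with Theorem \ref{thmfro} itself—$\sigma\sim[P_1]\oplus\dots\oplus[P_r]$—and the fact that similarity is compatible with direct sums yields the result.

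For the first half, the cleanest route is via the Chinese Remainder Theorem in the module-theoretic language. Viewing $K^n$ as a $K[X]$-module via $\sigma$, the companion matrix $[P_i]$ corresponds to the cyclic module $K[X]/(P_i)$. Since $P_i=\prod_j P_{ij}^{q_{ij}}$ is a product of pairwise coprime prime powers (the $P_{ij}$ are distinct irreducibles), CRT gives an isomorphism of $K[X]$-modules
\[
K[X]/(P_i)\;\cong\;\bigoplus_{j=1}^{s_i}K[X]/(P_{ij}^{q_{ij}}).
\]
Translating back into matrices, this says precisely that $[P_i]\sim\bigoplus_j[P_{ij}^{q_{ij}}]$. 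Since the elementary divisors of $\sigma$ are exactly the collection of all the $P_{ij}^{q_{ij}}$ as $i$ ranges over $1,\dots,r$, this decomposes $F(\sigma)$ into a direct sum of companion matrices of prime powers indexed by $\Phi$.

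For the second half I would verify that $[P^q]\sim J(P^q)$ for any irreducible monic $P$. Again in module terms both matrices represent the action of $X$ on the cyclic module $K[X]/(P^q)$: the companion matrix does so in the basis $1,X,X^2,\dots$, while the generalised Jordan block arises from the basis $\{X^a P^b:0\le a<\deg P,\ 0\le b<q\}$ ordered appropriately, for which multiplication by $X$ has exactly the block structure with $[P]$ on the diagonal and the rank-one coupling $\xi$ on the subdiagonal. Since both are cyclic of the same annihilator $(P^q)$ and thus isomorphic, the matrices are similar. Assembling the pieces: $\sigma\sim\bigoplus_i[P_i]\sim\bigoplus_i\bigoplus_j[P_{ij}^{q_{ij}}]\sim\bigoplus_{i\in\Phi}J(P_i^{q_i})$, where the last step applies $[P^q]\sim J(P^q)$ to each summand.

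The main obstacle is the second half—pinning down the explicit basis that exhibits $J(P^q)$ as the matrix of multiplication by $X$ on $K[X]/(P^q)$ and checking that the subdiagonal coupling is exactly $\xi$ (a single $1$ at position $(1,\deg P)$), not some other rank-one map. This is a concrete but fiddly linear-algebra computation with the chosen basis $\{X^a P^b\}$; the coprimality and CRT step is comparatively routine. Alternatively, one can sidestep the explicit basis by characterising $J(P^q)$ abstractly: show it is a cyclic $K[X]$-module with annihilator $(P^q)$ by exhibiting a cyclic vector, which forces $J(P^q)\sim[P^q]$ by the classification of cyclic modules; I would likely take whichever of these two is shorter to write cleanly.
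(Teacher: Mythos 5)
Your proposal is correct, but note that the paper does not actually prove Theorem \ref{thmjor} at all: its ``proof'' is a citation to \cite[Part V, Chapter 21, Section 5]{bjn}, and the argument you give is essentially the standard one found there --- refine the Frobenius form of Theorem \ref{thmfro} by splitting each cyclic module $K[X]/(P_i)$ via CRT into the prime-power cyclic pieces $K[X]/(P_{ij}^{q_{ij}})$, then identify $[P^q]$ with $J(P^q)$. The step you flag as fiddly does work out exactly as you hope: in the basis $1,X,\dots,X^{n-1},P,XP,\dots,X^{n-1}P,\dots,P^{q-1},\dots,X^{n-1}P^{q-1}$ of $K[X]/(P^q)$ (with $n=\deg P$), multiplication by $X$ sends $X^aP^b\mapsto X^{a+1}P^b$ for $a<n-1$ and $X^{n-1}P^b\mapsto P^{b+1}-\sum_{k=0}^{n-1}a_kX^kP^b$, so with the paper's column convention each diagonal block is $[P]$ and each subdiagonal coupling is a single $1$ in row $1$, column $n$ of the block, i.e.\ precisely the matrix $\xi$ of the paper's definition. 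Two trivial points worth a line each if you write this up: both $[P^q]$ and $J(P^q)$ represent $X$ on the same cyclic module, which gives the similarity you need (your ``cyclic vector'' alternative, using $e_1$, works equally well), and the claim that the order of summands is arbitrary follows since permuting direct summands is a similarity by a permutation matrix.
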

\begin{proof}
See \cite[Part V, Chapter 21, Section 5]{bjn}.
\end{proof}

The matrix $\bigoplus_{i\in\Phi}J(P_{i}^{q_{i}})$ in Theorem \ref{thmjor} is called the \textit{generalised Jordan form} or \textit{primary rational canonical form} of $\sigma$. It is uniquely determined up to the order of the generalised Jordan blocks. If the characteristic polynomial of $\sigma$ splits into linear factors, then one gets back the usual Jordan form of $\sigma$.

\subsection{The general linear group $\GL_n(K)$} 

In this subsection $n$ denotes a positive integer. 
\begin{definition}
The group $\GL_{n}(K)$ consisting of all invertible elements of $\Mat_n(K)$ is called the {\it general linear group} of degree $n$ over $K$.
\end{definition}

\begin{definition}
Let $a\in K$ and $1\leq i\neq j\leq n$. Then the matrix $t_{ij}(a)=e+ae^{ij}$ is called an {\it elementary transvection}. If $a\neq 0$, then $t_{ij}(a)$ is called \textit{nontrivial}. The subgroup $\E_n(K)$ of $\GL_n(K)$ generated by the elementary transvections is called the {\it elementary subgroup}.
\end{definition}

\begin{lemma}\label{propdecomp}
For any $\sigma\in\GL_n(K)$ there is an $\epsilon\in\E_n(K)$ such that $\sigma=\epsilon d_n(\det(\sigma))$. It follows that $\E_n(K)$ equals the subgroup $\SL_n(K)$ of $\GL_n(K)$ consisting of all matrices with determinant $1$.
\end{lemma}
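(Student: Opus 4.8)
The plan is to prove a standard fact from the elementary theory of linear groups: every invertible matrix can be reduced to a diagonal matrix $d_n(\det(\sigma))$ (the identity with its last diagonal entry replaced by $\det(\sigma)$) using only elementary transvections, and consequently $\E_n(K)=\SL_n(K)$. The whole argument is an application of Gaussian elimination, carefully done so that only the transvection moves $t_{ij}(a)$ (which are row/column additions) and not the ``swap'' or ``scale one row'' operations are used directly.

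First I would observe that multiplying $\sigma$ on the left by $t_{ij}(a)$ adds $a$ times row $j$ to row $i$, and multiplying on the right by $t_{ij}(a)$ adds $a$ times column $i$ to column $j$. The main reduction step is: given a nonzero matrix, I want to clear out the first column below the $(1,1)$ entry and the first row to the right of it, leaving a $1$ in the top-left corner and zeros in the rest of the first row and column. The only delicate point is creating a nonzero entry in the right spot: since $\sigma$ is invertible its first column is nonzero, so some entry $\sigma_{i1}\neq 0$; if $\sigma_{11}=0$ I can add row $i$ to row $1$ via a left multiplication by $t_{1i}(1)$ to make the $(1,1)$ entry nonzero. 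Then I scale the $(1,1)$ entry to $1$ — but scaling is \emph{not} an elementary transvection, so instead I would use the well-known trick that the diagonal matrix $\mathrm{diag}(u,u^{-1})$ embedded in two coordinates is a product of three transvections (the identity $t_{12}(u)\,t_{21}(-u^{-1})\,t_{12}(u)$ realizes an antidiagonal/signed-swap block, and composing two such yields the scaling $\mathrm{diag}(u,u^{-1})$). This lets me rescale while staying inside $\E_n(K)$, at the cost of compensating in another coordinate.

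Next I would clear the remainder of the first column: for each $i>1$, left-multiply by $t_{i1}(-\sigma_{i1})$ to zero out $\sigma_{i1}$, and symmetrically clear the first row by right-multiplying by $t_{1j}(-\sigma_{1j})$. After this the matrix is block-diagonal of the form $1\oplus\sigma'$ with $\sigma'\in\GL_{n-1}(K)$, and $\det\sigma'=\det\sigma$. I would then induct on $n$: the inductive hypothesis reduces $\sigma'$ (up to elementary operations in the bottom $(n-1)\times(n-1)$ block) to $d_{n-1}(\det\sigma')$, and assembling the pieces gives $\epsilon\sigma=d_n(\det\sigma)$ for some $\epsilon\in\E_n(K)$, i.e.\ $\sigma=\epsilon^{-1}d_n(\det\sigma)$ with $\epsilon^{-1}\in\E_n(K)$ as well. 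The base case $n=1$ is vacuous in the sense that $d_1(\det\sigma)=\sigma$, and the first genuine case $n=2$ is handled by the scaling identity above.

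For the final sentence of the lemma, the inclusion $\E_n(K)\leq\SL_n(K)$ is immediate since each $t_{ij}(a)$ has determinant $1$. For the reverse inclusion, if $\det\sigma=1$ then $d_n(\det\sigma)=d_n(1)=e$, so the decomposition gives $\sigma=\epsilon\in\E_n(K)$; hence $\SL_n(K)\leq\E_n(K)$, and equality follows. The main obstacle is purely bookkeeping: ensuring that every rescaling is carried out through the three-transvection identity rather than a naive diagonal scaling, so that the entire reduction never leaves $\E_n(K)$; once that device is in place the rest is routine Gaussian elimination and induction.
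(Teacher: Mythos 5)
The paper does not actually prove this lemma; it simply cites Bass \cite[\S 5]{bass}. Your Gaussian-elimination argument is the standard proof behind that citation, and its core is sound: the Whitehead-type identity you invoke is correct ($t_{12}(u)\,t_{21}(-u^{-1})\,t_{12}(u)$ is the antidiagonal block with entries $u$ and $-u^{-1}$, and the product of two such blocks gives $\operatorname{diag}(u,u^{-1})$), the rescaling device keeps everything inside $\E_n(K)$, and the induction on the size of the remaining block is routine.

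There is, however, one slip you should repair. Your reduction multiplies on \emph{both} sides: left multiplications clear the first column, but clearing the first row uses right multiplications by $t_{1j}(-\sigma_{1j})$. So what the elimination actually produces is $\epsilon_1\sigma\epsilon_2=d_n(\det\sigma)$ with $\epsilon_1,\epsilon_2\in\E_n(K)$, not the one-sided equation $\epsilon\sigma=d_n(\det\sigma)$ that you assert when ``assembling the pieces'' --- and the lemma demands the one-sided form $\sigma=\epsilon\, d_n(\det\sigma)$. Two standard one-line patches: (a) note that $d_n(a)$ normalizes $\E_n(K)$, since conjugating an elementary transvection $t_{ij}(b)$ by $d_n(a)$ yields another elementary transvection (the parameter is multiplied by $a^{\pm 1}$ exactly when one of the indices equals $n$), so $\E_n(K)\,d_n(a)\,\E_n(K)=\E_n(K)\,d_n(a)$; this is the same fact the paper itself uses in the proof of Lemma \ref{lem3} (``$d_n(a)$ commutes with elementary transvections modulo $E$''). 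Or (b) reorder the logic: the two-sided reduction already proves $\SL_n(K)=\E_n(K)$, because when $\det\sigma=1$ it gives $\epsilon_1\sigma\epsilon_2=e$, hence $\sigma=\epsilon_1^{-1}\epsilon_2^{-1}\in\E_n(K)$; then for general $\sigma$ the matrix $\sigma\, d_n(\det\sigma)^{-1}$ has determinant $1$, so it lies in $\E_n(K)$, which is precisely the claimed decomposition. With either patch your proof is complete and is essentially the argument the citation points to.
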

\begin{proof}
See for example \cite[\S 5]{bass}.
\end{proof}

The lemma below is easy to check.
\begin{lemma}\label{lemelrel}
The relations
\begin{align*}
t_{ij}(a)t_{ij}(b)&=t_{ij}(a+b), \tag{R1}\\
[t_{ij}(a),t_{hk}(b)]&=e \tag{R2}\text{ and}\\
[t_{ij}(a),t_{jk}(b)]&=t_{ik}(ab) \tag{R3}\\
\end{align*}
hold where $i\neq k, j\neq h$ in $(R2)$ and $i\neq k$ in $(R3)$.
\end{lemma}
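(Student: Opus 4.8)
The plan is to reduce everything to the single multiplication rule for matrix units, $e^{ij}e^{kl}=\delta_{jk}e^{il}$, where $\delta$ denotes the Kronecker delta. The one standing observation is that a transvection $t_{pq}(c)$ always has $p\neq q$, so that $(e^{pq})^2=\delta_{qp}e^{pq}=0$. First I would record the consequence that $t_{ij}(a)^{-1}=t_{ij}(-a)$: writing $X:=ae^{ij}$, which is nilpotent with $X^2=0$, we get $(e+X)(e-X)=e-X^2=e$, so $t_{ij}(a)^{-1}=e-ae^{ij}$.

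Relation (R1) is then immediate by expanding $t_{ij}(a)t_{ij}(b)=(e+ae^{ij})(e+be^{ij})=e+(a+b)e^{ij}+ab(e^{ij})^2$ and using $(e^{ij})^2=0$.

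For (R2) and (R3) I would treat both at once by expanding the commutator of $g=e+X$ and $h=e+Y$, where $X=ae^{ij}$ and $Y$ is the nilpotent part of the second transvection. Using $g^{-1}=e-X$, $h^{-1}=e-Y$ together with $X^2=Y^2=0$, a direct multiplication gives
\[
[g,h]=ghg^{-1}h^{-1}=e+XY-YX+YXY-XYX+XYXY.
\]
Everything now hinges on computing $XY$ and $YX$ from the multiplication rule. In (R2) the second transvection is $t_{hk}(b)$, so $Y=be^{hk}$ and $XY=ab\,\delta_{jh}e^{ik}$, $YX=ab\,\delta_{ki}e^{hj}$; the hypotheses $j\neq h$ and $i\neq k$ force $XY=YX=0$, whence every remaining term vanishes as well and $[g,h]=e$. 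In (R3) the second transvection is $t_{jk}(b)$, so $Y=be^{jk}$ and $XY=ab\,e^{ik}$ (since $\delta_{jj}=1$), while $YX=ab\,\delta_{ki}e^{jj}=0$ by the hypothesis $i\neq k$. Because $YX=0$, the three higher-order terms $YXY$, $XYX$ and $XYXY$ all vanish, leaving $[g,h]=e+ab\,e^{ik}=t_{ik}(ab)$.

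There is no real obstacle here; the computation is mechanical once the matrix-unit rule is in hand. The only point requiring care is the bookkeeping of indices and signs in the commutator expansion, together with tracking exactly which hypothesis kills which term: the condition $j\neq h$ (resp.\ $i\neq k$) annihilates $XY$ (resp.\ $YX$) in (R2), whereas in (R3) it is solely $i\neq k$ that kills $YX$, the surviving term $XY$ producing the claimed transvection.
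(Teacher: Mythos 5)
Your proof is correct: the commutator expansion $[g,h]=e+XY-YX+YXY-XYX+XYXY$ is right (note each higher-order term contains $YX$ or $XY$ as a factor, so your vanishing arguments go through), and it matches the paper's convention $[g,h]=ghg^{-1}h^{-1}$. The paper offers no proof at all --- it merely states the lemma is ``easy to check'' --- and your matrix-unit computation via $e^{ij}e^{kl}=\delta_{jk}e^{il}$ is exactly the standard verification being left to the reader.
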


\begin{definition}\label{defp}
Let $a\in K^*$ and $1\leq i\neq j\leq n$. We define
\[d_{i}(a):=e+(a-1)e^{ii}\in \GL_{n}(K)\]
and 
\begin{align*}
d_{ij}(a):=e+(a-1)e^{ii}+(a^{-1}-1)e^{jj}\in \SL_n(K)=\E_{n}(K).
\end{align*}
Moreover, we define 
\[p_{ij}:=e+e^{ij}+e^{ji}-e^{ii}-e^{jj}\in \GL_{n}(K)\]
and 
\[\hat p_{ij}:=e+e^{ij}-e^{ji}-e^{ii}-e^{jj}\in\SL_n(K)=\E_{n}(K).\]
\end{definition}

\subsection{The stable general linear group $\GL_\infty(K)$} 
\begin{definition}
The direct limit $G_\infty(K)=\varinjlim_{n}\GL_n(K)$ with respect to the transition homomorphisms $\phi_{n,n+k}:\GL_n(K)\rightarrow \GL_{n+k}(K),~\sigma\mapsto e_{k\times k}\oplus \sigma  $ is called the \textit{stable general linear group} over $K$.
\end{definition}

Let $\sigma\in \GL_m(K)$ and $\tau\in \GL_n(K)$. If $\phi_{m,\max\{m,n\}}(\sigma)=\phi_{n,\max\{m,n\}}(\tau)$, we write $\sigma\sim_\infty\tau$. We identify $\GL_\infty(K)$ with the set $\bigcup_{n}\GL_n(K)/\sim_\infty$ of all $\sim_\infty$-equivalence classes made a group by defining $[\sigma]_\infty[\tau]_\infty=[\phi_{m,\max\{m,n\}}(\sigma)\phi_{n,\max\{m,n\}}(\tau)]_\infty$ for any $\sigma\in \GL_m(K)$ and $\tau\in \GL_n(K)$. 

\begin{definition}
The subgroup $\E_\infty(K)$ of $\GL_\infty(K)$ generated by the elements $[t_{n,i,j}(a)]_\infty$ $(n\geq 2,~1\leq i\neq j\leq n,~a\in K)$ is called the \textit{elementary subgroup}. Here $t_{n,i,j}(a)$ denotes the elementary transvection $t_{ij}(a)\in \GL_n(K)$.
\end{definition}

\section{Products of conjugacy classes in $\GL_n(K)$}
In this section $n$ denotes an integer greater than $2$. We set $G:=\GL_n(K)$ and $E:=\E_n(K)$. $H$ denotes a subgroup of $G$ containing $E$, and $T$ denotes the $H$-class of $t_{12}(1)$. If $C$ is a noncentral $H$-class, then $m(C)$ is defined as in Section 1.

The lemma below shows that $T=t_{12}(1)^G=t_{12}(1)^E$. 
\begin{lemma}\label{lem2}
Let $\sigma\in G$. Then $t_{12}(1)\sim_E \sigma\Leftrightarrow t_{12}(1)\sim_H \sigma\Leftrightarrow t_{12}(1)\sim_G \sigma$.
\end{lemma}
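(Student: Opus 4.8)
The claim to prove is Lemma \ref{lem2}: for $\sigma\in G=\GL_n(K)$,
\[
t_{12}(1)\sim_E\sigma \iff t_{12}(1)\sim_H\sigma \iff t_{12}(1)\sim_G\sigma,
\]
where $E\leq H\leq G$. Since $E\leq H\leq G$, we have the trivial implications $\sim_E\;\Rightarrow\;\sim_H\;\Rightarrow\;\sim_G$, so the entire content of the lemma is the single reverse implication $t_{12}(1)\sim_G\sigma\;\Rightarrow\;t_{12}(1)\sim_E\sigma$. Everything else is squeezed between these two by the chain $E\leq H\leq G$.

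\textbf{The plan.} I would reduce the problem to conjugating $t_{12}(1)$ within $E=\SL_n(K)$ and show that the full general-linear orbit of $t_{12}(1)$ collapses to its elementary (special-linear) orbit. Concretely, suppose $t_{12}(1)\sim_G\sigma$, so $\sigma=\rho\,t_{12}(1)\,\rho^{-1}$ for some $\rho\in\GL_n(K)$. By Lemma \ref{propdecomp} I can write $\rho=\epsilon\,d_n(\det(\rho))$ with $\epsilon\in E$. The key observation is that the diagonal factor $d_n(d)$ (where $d=\det(\rho)$) commutes with or at least only rescales $t_{12}(1)$ in a controllable way: conjugating $t_{12}(1)=e+e^{12}$ by a diagonal matrix $d_n(d)=e+(d-1)e^{nn}$ fixes it, since $e^{12}$ is unaffected by changes in the $(n,n)$ entry (as $n\geq 3$, the indices $1,2,n$ interact trivially). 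Hence $\sigma=\epsilon\,t_{12}(1)\,\epsilon^{-1}$, giving $t_{12}(1)\sim_E\sigma$ directly.

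\textbf{The main obstacle.} The one place requiring care is verifying that $d_n(d)$ conjugates $t_{12}(1)$ trivially; more generally, even if one uses a diagonal matrix touching the first or second coordinate, conjugation only rescales the off-diagonal entry, yielding $t_{12}(c)$ for some $c\in K^*$, and one must then absorb that scaling back into $E$. The clean route is to exploit that for $n\geq 3$ one has a third coordinate available: the relation $t_{12}(c)\sim_E t_{12}(1)$ holds because $t_{12}(c)=d_{13}(c)\,t_{12}(1)\,d_{13}(c)^{-1}$ for a suitable $d_{13}(c)\in\E_n(K)$ (using $d_{ij}(a)\in\E_n(K)$ from Definition \ref{defp}), and such a diagonal conjugator lies in $E$ precisely because it has determinant $1$. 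This is exactly where the hypothesis $n\geq 3$ is essential: with a spare coordinate I can rescale $t_{12}$ using a determinant-one diagonal matrix. So the argument is: reduce $\rho$ to an elementary part times $d_n(\det\rho)$ via Lemma \ref{propdecomp}, check that $d_n$ fixes $t_{12}(1)$, and conclude.

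Thus the proof amounts to (i) noting the trivial chain of implications from $E\leq H\leq G$, (ii) applying Lemma \ref{propdecomp} to split any $\rho\in G$ as $\epsilon\,d_n(\det\rho)$, and (iii) a short direct computation showing $d_n(\det\rho)$ commutes with $t_{12}(1)$, so that the $G$-conjugation collapses to an $E$-conjugation. I expect step (iii) to be a one-line matrix verification rather than a genuine difficulty, so the real engine of the lemma is Lemma \ref{propdecomp} together with the fact that $n\geq 3$ supplies a free coordinate keeping the scaling inside $\SL_n(K)=E$.
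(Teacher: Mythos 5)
Your proposal is correct and follows essentially the same route as the paper: both reduce to the implication $t_{12}(1)\sim_G\sigma\Rightarrow t_{12}(1)\sim_E\sigma$, split the conjugator as $\rho=\epsilon\, d_n(\det\rho)$ via Lemma \ref{propdecomp}, and observe that $d_n(\det\rho)$ commutes with $t_{12}(1)$ because $n\geq 3$. Your extra discussion of absorbing a rescaling $t_{12}(c)\sim_E t_{12}(1)$ via $d_{13}(c)$ is correct but unnecessary here (it is really the content of Lemma \ref{lem1}), since $d_n$ fixes $t_{12}(1)$ exactly.
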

\begin{proof}
Clearly $t_{12}(1)\sim_E \sigma\Rightarrow t_{12}(1)\sim_H \sigma\Rightarrow t_{12}(1)\sim_G \sigma$. Hence it suffices to show that $t_{12}(1)\sim_G \sigma\Rightarrow t_{12}(1)\sim_E \sigma$. Suppose that $t_{12}(1)\sim_G \sigma$. Then there is a $\rho\in G$ such that $t_{12}(1)=\sigma^\rho$. By Lemma \ref{propdecomp} there is an $\epsilon\in E$ such that $\rho=\epsilon d_n(\det(\rho))$. Since $n\geq 3$, the matrix $t_{12}(1)$ commutes with $d_n(\det(\rho))$ and hence $t_{12}(1)=\sigma^\epsilon$. Thus $t_{12}(1)\sim_E\sigma$.
\end{proof}

It follows from the next lemma that $T$ contains all nontrivial elementary transvection.

\begin{lemma}\label{lem1}
Let $t_{ij}(a)$ and $t_{kl}(b)$ be nontrivial elementary transvections. Then $t_{ij}(a)$ $\sim_E t_{kl}(b)$.
\end{lemma}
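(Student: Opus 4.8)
The plan is to show that every nontrivial elementary transvection is $E$-conjugate to the single transvection $t_{12}(1)$; since $\sim_E$ is an equivalence relation, the assertion $t_{ij}(a)\sim_E t_{kl}(b)$ then follows at once by symmetry and transitivity. I would carry this out in two steps: first move the nonzero entry into position $(1,2)$, and then rescale it to $1$.

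For the first step (repositioning), I would conjugate $t_{ij}(a)=e+ae^{ij}$ by a suitable monomial matrix of determinant $1$, which lies in $E=\SL_n(K)$ by Lemma \ref{propdecomp}. Concretely, products of the matrices $\hat p_{kl}$ from Definition \ref{defp} realise (signed) permutations of the coordinate indices, and conjugating $t_{ij}(a)$ by such a matrix sends it to $e\pm ae^{12}=t_{12}(\pm a)$. Since the sign can be absorbed in the second step, this already yields $t_{ij}(a)\sim_E t_{12}(c)$ for some $c\in K^*$.

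For the second step (rescaling), I would conjugate $t_{12}(c)$ by the diagonal matrix $d_{13}(c)=\operatorname{diag}(c,1,c^{-1},1,\dots,1)\in E$ from Definition \ref{defp}. A direct computation gives $d_{13}(c)^{-1}e^{12}d_{13}(c)=c^{-1}e^{12}$, hence $t_{12}(c)^{d_{13}(c)}=e+c\cdot c^{-1}e^{12}=t_{12}(1)$. Combining the two steps and invoking transitivity of $\sim_E$ completes the proof.

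The step I expect to be delicate is the rescaling, and it is precisely here that the hypothesis $n\geq 3$ is used. Conjugating $t_{12}(c)$ by $d_{12}(c)=\operatorname{diag}(c,c^{-1},1,\dots,1)$, the only diagonal element of $E$ supported on the first two coordinates, scales the off-diagonal entry by $c^{-2}$, i.e.\ only by squares, which over a general field is not enough to reach an arbitrary nonzero value. The presence of a third coordinate lets me use $d_{13}(c)$ instead, scaling the entry by the arbitrary factor $c^{-1}$; this is the key point that makes the argument go through.
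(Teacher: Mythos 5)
Your proof is correct and follows essentially the same route as the paper: conjugation by a product of the $\hat p_{st}$'s to move the entry to a fixed position, followed by conjugation by a diagonal matrix $d_{1m}$ (the paper uses $d_{lm}(a^{-1}b)$ with $m\neq k,l$) to rescale, which is exactly where $n\geq 3$ enters in both arguments. Normalising everything to $t_{12}(1)$ rather than transforming $t_{ij}(a)$ directly into $t_{kl}(b)$ is only a cosmetic difference.
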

\begin{proof}
It is an easy exercise to show that $t_{ij}(a)^\epsilon=t_{kl}(a)$ for some 
$\epsilon\in\E$ which is a product of $\hat p_{st}$'s (see Definition \ref{defp}). Hence $t_{ij}(a)\sim_E t_{kl}(a)$. It remains to show that $t_{kl}(a)\sim_E t_{kl}(b)$. But clearly $t_{kl}(a)^{d_{lm}(a^{-1}b)}=t_{kl}(b)$ for any $m\neq k,l$. 
\end{proof}

Let $C$ be an $H$-class. Since similar matrices have the same characteristic polynomial (resp. determinant, trace, invariant factors, elementary divisors), we can define $\chi_C$ (resp. $\det(C)$, $\tr(C)$, the invariant factors of $C$, the elementary divisors of $C$, the Frobenius form $F(C)$) in the obvious way. Below we compute $F(T)$. Note that $T=\{g\in G\mid F(\sigma)=F(T)\}$ by Lemma \ref{lem2}.

\begin{lemma}\label{lemfroel}
$F(T)=[X-1]\oplus\dots\oplus [X-1]\oplus[(X-1)^2]$.
\end{lemma}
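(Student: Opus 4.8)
The plan is to reduce the statement about the $H$-class $T$ to a computation for the single matrix $t_{12}(1)$ and then read off its invariant factors. By Lemma \ref{lem2} the $H$-class $T$ coincides with the $G$-class of $t_{12}(1)$, i.e.\ with the set of matrices similar to $t_{12}(1)$; hence $F(T)=F(t_{12}(1))$ and it suffices to determine the Frobenius form of $\tau:=t_{12}(1)=e+e^{12}$.

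First I would record the two polynomial invariants that pin down the shape of $F(\tau)$. Since $\tau-e=e^{12}$ and $(e^{12})^2=0$, the matrix $\tau$ is unipotent: its characteristic polynomial is $\chi_\tau=(X-1)^n$, while its minimal polynomial is $(X-1)^2$ (because $\tau-e\neq 0$ but $(\tau-e)^2=0$). By Theorem \ref{thmfro} the invariant factors $P_1\mid\dots\mid P_r$ all divide the largest one $P_r$, which equals the minimal polynomial $(X-1)^2$; in particular every $P_i$ is a power of $X-1$, and since each is nonconstant it is either $(X-1)$ or $(X-1)^2$. Their product is $\chi_\tau=(X-1)^n$.

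It remains to determine $r$ and to confirm that exactly one invariant factor equals $(X-1)^2$. For this I would pass to the elementary divisors via Theorem \ref{thmjor}: for a unipotent matrix the generalised Jordan blocks are the ordinary Jordan blocks for the eigenvalue $1$, and these are governed by the nilpotent part $e^{12}$. As $e^{12}$ has rank $1$, its Jordan structure consists of one block of size $2$ together with $n-2$ blocks of size $1$; equivalently, the elementary divisors of $\tau$ are $(X-1)^2$ once and $(X-1)$ with multiplicity $n-2$. Reassembling these into invariant factors (grouping the highest power of $X-1$ into $P_r$, the next into $P_{r-1}$, and so on) gives $P_1=\dots=P_{n-2}=(X-1)$ and $P_{n-1}=(X-1)^2$, so $r=n-1$ and $F(\tau)=[X-1]\oplus\dots\oplus[X-1]\oplus[(X-1)^2]$ with $n-2$ copies of $[X-1]$, as claimed.

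The computation is essentially routine; if one prefers to avoid the Jordan structure, the same invariant factors can be obtained directly from the Smith normal form of the characteristic matrix $Xe-\tau$. There the only nonzero entries are $X-1$ on the diagonal and $-1$ in position $(1,2)$, so every term in the expansion of a $k\times k$ minor is $\pm(X-1)^k$, $\pm(X-1)^{k-1}$, or $0$; hence the $k$-th determinantal divisor is $(X-1)^{k-1}$ for $k\leq n-1$ (a minor equal to $\pm(X-1)^{k-1}$ is exhibited by choosing rows $\{1,3,4,\dots,k+1\}$ and columns $\{2,3,\dots,k+1\}$), while the top one is $(X-1)^n$. Taking successive quotients recovers the invariant factors above. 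The only point requiring a little care is the bookkeeping that turns the list of elementary divisors into the divisibility chain $P_1\mid\dots\mid P_r$; everything else is immediate.
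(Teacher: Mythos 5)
Your proof is correct, but it takes a genuinely different route from the paper's. The paper first uses Lemma \ref{lem1} to replace $t_{12}(1)$ by its $E$-conjugate $t_{n,n-1}(1)$ and then finishes with a single explicit conjugation,
\[t_{n,n-1}(1)^{t_{n-1,n}(1)}=e_{(n-2)\times(n-2)}\oplus\begin{pmatrix}0&-1\\1&2\end{pmatrix},\]
whose right-hand side is literally $[X-1]\oplus\dots\oplus[X-1]\oplus[(X-1)^2]$; since $(X-1)\mid\dots\mid(X-1)\mid(X-1)^2$, the uniqueness statement in Theorem \ref{thmfro} identifies this as the Frobenius form, and no further theory is needed. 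You instead derive the invariant factors abstractly: the minimal polynomial $(X-1)^2$ and characteristic polynomial $(X-1)^n$ restrict each $P_i$ to $(X-1)$ or $(X-1)^2$, and the rank-one nilpotent part $e^{12}$ fixes the Jordan/elementary-divisor data (one block of size $2$, $n-2$ blocks of size $1$), which you correctly repackage into the divisibility chain; your Smith-normal-form variant is also sound, and you were right to check that the exhibited minor on rows $\{1,3,\dots,k+1\}$ and columns $\{2,\dots,k+1\}$ exists precisely for $k\leq n-1$, with $d_1=1$ coming from the entry $-1$. What the paper's argument buys is brevity and self-containedness: one verifiable matrix identity plus uniqueness of the Frobenius form, with no appeal to minimal polynomials, Jordan theory, or determinantal divisors. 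What yours buys is transparency and generality: it explains structurally why the form must be as claimed and applies verbatim to any unipotent with rank-one nilpotent part, whereas the paper's conjugating element $t_{n-1,n}(1)$ has to be produced ad hoc. Both arguments are complete; your invocation of Lemma \ref{lem2} is harmless but strictly unnecessary, since $F(T)$ is already well defined for the $H$-class $T$ because similar matrices share invariant factors.
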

\begin{proof}
By Lemma \ref{lem1}, we have $F(T)=F(t_{n,n-1}(1))$. One checks easily that 
\[t_{n,n-1}(1)^{t_{n-1,n}(1)}= \left(\begin{array}{c|cc}e_{(n-2)\times(n-2)}&&\\\hline &&-1\\&1&2\end{array}\right).\]
\end{proof}

Let $C$ and $D$ be noncentral $H$-classes. We write $C\sim D$ and call $C$ and $D$ \textit{conjugated} if there is a $\rho\in G$ such that $C^\rho=D$. The lemma below implies that $m(C)$ does only depend on the conjugacy class of $C$.

\begin{lemma}\label{lem3}
Let $C\sim D$ be conjugated noncentral $H$-classes and suppose that $T\subseteq C^{i_1}\dots C^{i_k}$ where $i_1,\dots,i_k\in\{\pm 1\}$. Then $T\subseteq D^{i_1}\dots D^{i_k}$.
\end{lemma}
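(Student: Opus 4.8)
The plan is to exploit that conjugation by a fixed $\rho\in G$ is an automorphism of $G$, combined with the crucial fact that $T$ is invariant under \emph{every} such conjugation. Write $\gamma_\rho\colon G\to G$, $x\mapsto x^\rho=\rho^{-1}x\rho$, and for a subset $A\subseteq G$ put $A^\rho:=\{a^\rho\mid a\in A\}$. Since $\gamma_\rho$ is a bijection, it preserves inclusions and is compatible with the set operations in play: for all $A,B\subseteq G$ one has $(AB)^\rho=A^\rho B^\rho$ (because $a^\rho b^\rho=(ab)^\rho$) and $(A^{-1})^\rho=(A^\rho)^{-1}$ (because inversion commutes with conjugation elementwise). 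I would record these three properties first, as they are the only formal tools needed.

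Next I would verify that $D^{i_j}=(C^{i_j})^\rho$ for each $j$, in both cases $i_j=\pm1$. Writing $C=g^H$, the hypothesis $C^\rho=D$ directly gives $D=(C^1)^\rho$. For the inverse, note $C^{-1}=(g^{-1})^H$ and use $(C^{-1})^\rho=(C^\rho)^{-1}=D^{-1}$, which is again an $H$-class (the inverse of an $H$-class is an $H$-class); thus $D^{-1}=(C^{-1})^\rho$. Hence in all cases $D^{i_j}=(C^{i_j})^\rho$. Combining this with distributivity over products yields
\[
D^{i_1}\dots D^{i_k}=(C^{i_1})^\rho\dots(C^{i_k})^\rho=(C^{i_1}\dots C^{i_k})^\rho .
\]

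Applying $\gamma_\rho$ to the hypothesis $T\subseteq C^{i_1}\dots C^{i_k}$ and using that $\gamma_\rho$ preserves inclusions then gives $T^\rho\subseteq(C^{i_1}\dots C^{i_k})^\rho=D^{i_1}\dots D^{i_k}$. To finish I must identify $T^\rho$ with $T$. This is the one step that requires genuine input rather than bookkeeping: since $\rho$ need not lie in $H$, the mere fact that $T$ is an $H$-class only yields $T^h=T$ for $h\in H$. The point is that Lemma \ref{lem2} upgrades this to full $G$-invariance, for it shows that the $H$-class $T$ of $t_{12}(1)$ coincides with the $G$-conjugacy class $t_{12}(1)^G$, and a $G$-class satisfies $T^\rho=t_{12}(1)^{G\rho}=t_{12}(1)^{G}=T$ for every $\rho\in G$. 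With $T^\rho=T$ in hand, the chain above reads $T\subseteq D^{i_1}\dots D^{i_k}$, which is the claim.
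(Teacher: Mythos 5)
Your proof is correct, and it takes a genuinely different, more conceptual route than the paper's. You treat conjugation by the single element $\rho$ as an automorphism $\gamma_\rho$ of $G$, check that it distributes over products and elementwise inverses of subsets so that $(C^{i_1}\dots C^{i_k})^\rho=D^{i_1}\dots D^{i_k}$ (your identification $(C^{-1})^\rho=(C^\rho)^{-1}=D^{-1}$ is consistent with the paper's definition $C^{-1}:=(g^{-1})^H$, since that set is exactly $\{c^{-1}\mid c\in C\}$), and then reduce the whole lemma to the one nontrivial fact $T^\rho=T$, which you correctly extract from Lemma \ref{lem2}: $T=t_{12}(1)^H=t_{12}(1)^G$ is a full $G$-class, hence invariant under every inner automorphism of $G$. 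The paper instead argues at the level of elements: it picks $\sigma=\tau^\rho$, writes $t_{12}(1)=(\sigma^{i_1})^{\rho_1}\dots(\sigma^{i_k})^{\rho_k}$ with $\rho_j\in H$, decomposes $\rho=\epsilon d_n(a)$ and $\rho_j=\epsilon_j d_n(a_j)$ via Lemma \ref{propdecomp}, pushes $d_n(a)$ past the $\epsilon_j$ modulo $E$, and finally cancels $d_n(a)$ against $t_{12}(1)$ using $n\geq 3$. The two arguments rest on the same underlying inputs --- the decomposition of Lemma \ref{propdecomp} and the fact that $t_{12}(1)$ commutes with $d_n(a)$ enter your proof only through Lemma \ref{lem2}, whose proof uses exactly these --- but your packaging is shorter, avoids the modulo-$E$ commutation bookkeeping entirely, and isolates the real reason the lemma holds: the only property of $T$ used is that it is a union of $G$-classes. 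What the paper's explicit computation buys is a concrete description of the new conjugators $\epsilon\epsilon'_j d_n(a_j)\in H$ witnessing $t_{12}(1)\in D^{i_1}\dots D^{i_k}$, which your abstract argument leaves implicit; nothing in the statement of the lemma requires that extra information.
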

\begin{proof}
Choose a $\sigma \in C$ and a $\tau\in D$. Since $C\sim D$, there is a $\rho\in G$ such that $\sigma=\tau^\rho$. By Lemma \ref{propdecomp} there is an $\epsilon\in E$ and an $a\in K^*$ such that $\rho=\epsilon d_n(a)$. Hence
\begin{equation}
\sigma=\tau^{\epsilon d_n(a)}.
\end{equation}
Since $T\subseteq C^{i_1}\dots C^{i_k}$, there are $\rho_1,\dots,\rho_k\in H$ such that 
\begin{equation*}
t_{12}(1)=(\sigma^{i_1})^{\rho_1}\dots (\sigma^{i_k})^{\rho_k}.
\end{equation*}
By Lemma \ref{propdecomp} there are $\epsilon_1,\dots,\epsilon_k\in E$ and $a_1,\dots,a_k\in K$ such that $\rho_i=\epsilon_i d_n(a_i)~(1\leq i \leq k)$. Note that $d_n(a_1),\dots,d_n(a_k)\in H$ since $H$ contains $E$. We have 
\begin{equation}
t_{12}(1)=(\sigma^{i_1})^{\epsilon_1 d_n(a_1)}\dots (\sigma^{i_k})^{\epsilon_k d_n(a_k)}.
\end{equation}
It follows from Equations (2) and (3) that 
\begin{equation*}
t_{12}(1)=(\tau^{i_1})^{\epsilon d_n(a)\epsilon_1 d_n(a_1)}\dots (\tau^{i_k})^{\epsilon d_n(a)\epsilon_k d_n(a_k)}.
\end{equation*}
One easily checks that $d_n(a)$ commutes with elementary transvections modulo $E$. Hence there are $\epsilon'_1,\dots,\epsilon'_k\in E$ such that $d_n(a)\epsilon_i=\epsilon_i'd_n(a)~(1\leq i\leq k)$. We get 
\begin{equation*}
t_{12}(1)=(\tau^{i_1})^{\epsilon \epsilon'_1 d_n(a_1)d_n(a)}\dots (\tau^{i_k})^{\epsilon \epsilon'_k d_n(a_k)d_n(a)}.
\end{equation*}
Since $n\geq 3$, the matrix $t_{12}(1)$ commutes with $d_n(a)$ and thus
\begin{equation*}
t_{12}(1)=(\tau^{i_1})^{\epsilon \epsilon'_1 d_n(a_1)}\dots (\tau^{i_k})^{\epsilon \epsilon'_k d_n(a_k)}\in D^{i_1}\dots D^{i_k}.
\end{equation*}
\end{proof}

\begin{lemma}\label{lemm=1}
Let $C$ be a noncentral $H$-class. Then $m(C)=1$ iff $C=T$.
\end{lemma}
\begin{proof}
Clear since $T=T^{-1}$ by Lemma \ref{lem1}.
\end{proof}

Let $C$ be a noncentral $H$-class. Proposition \ref{proproot} below shows that if $\chi_C$ has a root in $K$ (which is always true if $K$ is algebraically closed), then $m(C)\leq 2$. But we will see later that $m(C)$ can be greater than $2$ if $\chi_C$ has no root. 

\begin{proposition}\label{proproot}
Let $C$ be a noncentral $H$-class. If $\chi_C$ has a root in $K$, then $T\subseteq CC^{-1}$.
\end{proposition}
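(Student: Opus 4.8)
The plan is to reduce the statement to producing a \emph{single} nontrivial transvection inside $CC^{-1}$. Indeed, if $x=ab$ with $a\in C$, $b\in C^{-1}$ and $h\in H$, then $x^h=a^hb^h\in CC^{-1}$, so $CC^{-1}$ is invariant under $H$-conjugation, hence a union of $H$-classes. Since $T=\{g\in G\mid F(g)=F(T)\}$ is the single $H$-class consisting of all nontrivial transvections, it suffices to exhibit one element of $T$ in $CC^{-1}$. By Lemma \ref{lem3} I may replace $C$ by any $G$-conjugate, so I can choose the representative $\sigma\in C$ freely. Finally, for $h\in H$ one has $[\sigma,h]=\sigma\,(\sigma^{-1})^{h^{-1}}\in CC^{-1}$, so the goal becomes: find $h$ with $[\sigma,h]$ a nontrivial transvection.

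The core computation uses the hypothesis that $\chi_C$ has a root $\lambda\in K$. Writing $\mathbf e_1,\dots,\mathbf e_n$ for the standard basis, I would conjugate $\sigma$ so that $\sigma\mathbf e_k=\lambda\mathbf e_k$ for some index $k$ (column $k$ equal to $\lambda\mathbf e_k$, which also forces column $k$ of $\sigma^{-1}$ to be $\lambda^{-1}\mathbf e_k$). Taking $h=t_{kl}(s)$ with $l\neq k$, $s\neq 0$ and expanding, the cross term drops out because $(\sigma^{-1})_{lk}=0$, yielding
\[ [\sigma,t_{kl}(s)]=e+s\,\mathbf e_k v^{\mathrm T},\qquad v^{\mathrm T}=\lambda\,(\mathbf e_l^{\mathrm T}\sigma^{-1})-\mathbf e_l^{\mathrm T}. \]
Here $\mathbf e_k v^{\mathrm T}$ has rank $\le 1$ and squares to $v_k\,\mathbf e_k v^{\mathrm T}=0$ (as $v_k=0$), so $[\sigma,t_{kl}(s)]$ is a nontrivial transvection precisely when $v\neq 0$, i.e. when the $l$-th row $\mathbf e_l^{\mathrm T}\sigma^{-1}$ differs from $\lambda^{-1}\mathbf e_l^{\mathrm T}$ — equivalently when $\mathbf e_l$ is \emph{not} a left $\lambda$-eigenvector of $\sigma$.

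It remains to secure suitable $k\neq l$. In a basis $b_1,\dots,b_n$ with dual basis $\beta_1,\dots,\beta_n$ the requirement reads $b_k\in\ker(\sigma-\lambda)$ and $\beta_l\notin\ker(\sigma^{\mathrm T}-\lambda)$ for some $l\neq k$. Since the right and left $\lambda$-eigenspaces share the same dimension $g$, if $g\le n-2$ then the $n-1$ independent covectors $\beta_l$ ($l\neq k$) cannot all lie in the left eigenspace, so a good $l$ exists; thus the construction succeeds whenever $C$ has an eigenvalue in $K$ of geometric multiplicity $\le n-2$. To arrange this from the hypothesis: if the given root $\lambda$ has $g=n$ then $\sigma=\lambda e$ is central (excluded); if $g=n-1$ then $\sigma=\lambda e+R$ with $R$ of rank one, and when $\tr(R)\neq 0$ the second eigenvalue $\mu=\lambda+\tr(R)\in K$ has geometric multiplicity $1\le n-2$ (here $n\ge 3$ is used), so the construction applies to $\mu$.

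The one genuinely exceptional class — and the main obstacle — is $R$ nilpotent, i.e. $\sigma=\lambda\tau_0$ with $\tau_0$ a transvection, where the only eigenvalue $\lambda$ has geometric multiplicity $n-1$ and the commutator above cannot produce a transvection. Here I would argue differently: every $H$-conjugate of $\sigma$ is again $\lambda$ times a transvection, so $C=\lambda T$ and $C^{-1}=\lambda^{-1}T$, whence $CC^{-1}=TT$. It then suffices to prove $T\subseteq TT$, i.e. that some (hence every) nontrivial transvection is a product of two of them; for instance the identity $t_{12}(1)=t_{13}(1)\,(e+e^{12}-e^{13})$ exhibits $t_{12}(1)\in TT$, again using $n\ge 3$. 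This disposes of the exceptional class and completes all cases.
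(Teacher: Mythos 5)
Your proof is correct, and it takes a genuinely different route from the paper's. Both arguments rest on the same mechanism --- $[\sigma,h]=\sigma\,(\sigma^{-1})^{h^{-1}}\in CC^{-1}$ for $h\in H$, together with the fact that $CC^{-1}$ is closed under $H$-conjugation, so a single nontrivial transvection suffices (the paper reaches the same conclusion via Lemmas \ref{lem1} and \ref{lem3}) --- but they diverge in how the transvection is produced. The paper normalises $\sigma$ by its generalised Jordan form and splits into cases according to the size $q$ of the Jordan block of the root $a$, with further subcases on the degrees of the remaining invariant factors, each settled by an explicit matrix computation. You instead work coordinate-free: the rank-one identity $\sigma e^{kl}\sigma^{-1}=(\sigma\mathbf e_k)(\mathbf e_l^{\mathrm T}\sigma^{-1})=\lambda\,\mathbf e_k(\mathbf e_l^{\mathrm T}\sigma^{-1})$ yields $[\sigma,t_{kl}(s)]=e+s\,\mathbf e_k v^{\mathrm T}$ with $v_k=0$, which lies in $T$ (its Frobenius form is $F(T)$ by Lemma \ref{lemfroel}, so Lemma \ref{lem2} applies) exactly when $\mathbf e_l^{\mathrm T}$ is not a left $\lambda$-eigenvector, and your dimension count disposes of every eigenvalue of geometric multiplicity $g\le n-2$ in one stroke. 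The residual possibilities $g=n-1$ are handled by passing to the second eigenvalue $\mu=\lambda+\tr(R)$ when $\tr(R)\neq0$ (note $\mu\neq0$ automatically, as $\sigma$ is invertible), and, when $R$ is nilpotent, by the observation $C=\lambda T$, $C^{-1}=\lambda^{-1}T$, hence $CC^{-1}=TT\supseteq T$ via $t_{12}(1)=t_{13}(1)(e+e^{12}-e^{13})$, which I checked and which uses $n\ge3$. What the paper's approach buys is uniformity --- the Jordan-form computations never need the multiplicity dichotomy or the $T\subseteq TT$ trick; what yours buys is conceptual transparency and far fewer explicit matrices.

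One inaccuracy, harmless to the logic: in the exceptional case $\sigma=\lambda\tau_0$ the commutator construction is not in fact powerless. The obstruction you describe arises only if the eigenvector $b_k$ is chosen inside the one-dimensional image of $\sigma-\lambda e$ (which lies in $\ker(\sigma-\lambda e)$ when $\tr(R)=0$); choosing $b_k\in\ker(\sigma-\lambda e)$ outside that image, some dual covector $\beta_l$ escapes the left eigenspace and $[\sigma,t_{kl}(s)]$ is again a nontrivial transvection --- for instance $[t_{12}(1),t_{31}(s)]=t_{32}(-s)$, which is in effect what the paper's Subcase 2.1 computes when $a=b$. Since you resolve that case completely by the independent $TT$ argument, this misstatement creates no gap.
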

\begin{proof}
Let $a\in K$ be a root of $\chi_C$. First note that $a\neq 0$ since the constant coefficient of $\chi_C$ is $(-1)^n\det(C)\neq 0$. Since $(X-a)$ divides $\chi_C$ we get that $(X-a)^q$ is an elementary divisor of $C$ for some $q\in\N$ (since $\chi_C$ is the product of the elementary divisors of $C$). Choose a $\sigma\in C$. By Theorem \ref{thmjor} we have 
\[\sigma\sim \begin{pmatrix} J((X-a)^q)&\\&\tau\end{pmatrix}=:\rho \]
for some $\tau\in \GL_{n-q}(K)$. Recall that 
\[J((X-a)^q)=\begin{pmatrix}a&&&\\1&a&&\\&\ddots&\ddots&\\&&1&a\end{pmatrix}\in\GL_{q}(K).\]
By Theorem \ref{thmfro} we may assume that $\tau$ is in Frobenius form, i.e. there are nonconstant, monic polynomials $P_1,\dots,P_r\in K[X]$ such that $P_1|P_2|\dots|P_r$ and $\tau=[P_1]\oplus\dots\oplus [P_r]$.
\begin{enumerate}[{\bf Case} 1]
\item Suppose that $q=1$. Then 
\[\rho=\begin{pmatrix} a&\\&\tau\end{pmatrix}.\]
\begin{enumerate}[{\bf Subcase}{ 1.}1] 
\item Suppose that each of the $P_i$'s has degree $1$. Since $P_1|P_2|\dots|P_r$, it follows that $P_1=\dots=P_r=X-b$ for some $b\in K^*$. Hence 
\[\rho=\begin{pmatrix} a&&&\\&b&&\\&&\ddots&\\&&&b\end{pmatrix}.\]
One checks easily that $[\rho,t_{12}(1)]=t_{12}(ab^{-1}-1)$. It follows that $t_{12}(ab^{-1}-1)\in DD^{-1}$ where $D=\rho^H$. Since $\sigma$ is noncentral, we have $a\neq b$ and hence $ab^{-1}-1\neq 0$. Hence $T\subseteq DD^{-1}$ by Lemma \ref{lem1} and thus $T\subseteq CC^{-1}$ by Lemma \ref{lem3}.
\item Suppose there is an $i$ such that $P_i$ has degree $t\geq 2$. Write $P_i=X^t+b_{t-1}X^{t-1}+\dots+b_1X+b_0$. Then 
\[\sigma\sim \left(\begin{array}{c|ccccc|c}a&&&&&&\\\hline &&&&&-b_0&\\&1&&&&-b_1&\\&&1&&&-b_2&\\&&&\ddots&&\vdots&\\&&&&1&-b_{t-1}&\\\hline&&&&&&*\end{array}\right)=:\xi.\]
It follows that 
\[\sigma\sim \xi^{t_{23}(-a)}=\left(\begin{array}{c|ccccc|c}a&&&&&&\\\hline &a&*&&&*&\\&1&*&&&*&\\&&1&&&-b_2&\\&&&\ddots&&\vdots&\\&&&&1&-b_{t-1}&\\\hline&&&&&&*\end{array}\right)=:\zeta.\]
One checks easily that $[\zeta,t_{21}(1)]=t_{31}(a^{-1})$. It follows as in Subcase 1.1 that $T\subseteq CC^{-1}$.
\end{enumerate}
\item Suppose that $q=2$. Then 
\[\rho= \left(\begin{array}{cc|c} a&&\\1&a&\\\hline&&\tau\end{array}\right).\]
\begin{enumerate}[{\bf Subcase}{ 2.}1] 
\item Suppose that each of the $P_i$'s has degree $1$. Since $P_1|P_2|\dots|P_r$, it follows that $P_1=\dots=P_r=X-b$ for some $b\in K^*$. Hence 
\[\rho= \left(\begin{array}{cc|ccc} a&&&&\\1&a&&&\\\hline&&b&&\\&&&\ddots&\\&&&&b\end{array}\right).\]
Clearly
\[\sigma\sim\rho^{t_{12}(a-b)}= \left(\begin{array}{cc|ccc} b&*&&&\\1&*&&&\\\hline&&b&&\\&&&\ddots&\\&&&&b\end{array}\right)=:\xi.\]
One checks easily that $[\xi,t_{13}(1)]=t_{23}(b^{-1})$. It follows as in Subcase 1.1 that $T\subseteq CC^{-1}$.
\item Suppose there is an $i$ such that $P_i$ has degree $t\geq 2$. Write $P_i=X^t+b_{t-1}X^{t-1}+\dots+b_1X+b_0$. Then 
\[\sigma\sim \left(\begin{array}{cc|ccccc|c}a&&&&&&&\\1&a&&&&&&\\\hline &&&&&&-b_0&\\&&1&&&&-b_1&\\&&&1&&&-b_2&\\&&&&\ddots&&\vdots&\\&&&&&1&-b_{t-1}&\\\hline&&&&&&&*\end{array}\right)=:\xi.\]
It follows that 
\[\sigma\sim \xi^{t_{34}(-a)}=\left(\begin{array}{cc|ccccc|c}a&&&&&&&\\1&a&&&&&&\\\hline &&a&*&&&*&\\&&1&*&&&*&\\&&&1&&&-b_2&\\&&&&\ddots&&\vdots&\\&&&&&1&-b_{t-1}&\\\hline&&&&&&&*\end{array}\right)=:\zeta.\]
One checks easily that $[\zeta,t_{31}(1)]=t_{41}(a^{-1})$. It follows as in Subcase 1.1 that $T\subseteq CC^{-1}$.
\end{enumerate}
\item Suppose that $q\geq 3$. Then 
\[\rho= \left(\begin{array}{ccc|c} a&&&\\1&a&&\\&1&a&\\\hline&&&*\end{array}\right).\]
One checks easily that $[\rho,t_{21}(1)]=t_{31}(a^{-1})$. It follows as in Subcase 1.1 that $T\subseteq CC^{-1}$.
\end{enumerate}
\end{proof}

Lemma \ref{lemm=1} and Proposition \ref{proproot} directly imply the following theorem.
\begin{theorem}
Suppose that $K$ is algebraically closed. Let $C$ be a noncentral $H$-class. Then $m(C)=1$ if $C=T$ respectively $m(C)=2$ if $C\neq T$.
\end{theorem}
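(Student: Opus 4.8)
The plan is to derive the statement directly from the two preceding results, treating Lemma \ref{lemm=1} and Proposition \ref{proproot} as black boxes and splitting on whether $C=T$. For the case $C=T$, there is nothing to do beyond invoking Lemma \ref{lemm=1}, which gives $m(C)=1$ immediately.

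The substance is in the case $C\neq T$, which I would handle by establishing matching lower and upper bounds. For the lower bound, I would use the contrapositive of Lemma \ref{lemm=1}: since $C$ is noncentral and $C\neq T$, we cannot have $m(C)=1$; as $m(C)$ is a well-defined positive integer (the minimum exists by the Sandwich Classification Theorem, as noted after the definition of $m(C)$), this forces $m(C)\geq 2$. For the upper bound, the single observation that drives everything is that $K$ being algebraically closed guarantees the hypothesis of Proposition \ref{proproot}: the characteristic polynomial $\chi_C$ is a monic polynomial of degree $n\geq 3$, hence nonconstant, and so it has a root in $K$. Proposition \ref{proproot} then applies verbatim and yields $T\subseteq CC^{-1}$. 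Reading this through the definition of $m(C)$ with $m=2$, $i_1=1$, $i_2=-1$ gives $m(C)\leq 2$. Combining the two bounds gives $m(C)=2$, completing the case $C\neq T$ and hence the theorem.

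I do not expect any genuine obstacle here, since the theorem is essentially a repackaging of the two earlier results once the algebraic-closure hypothesis is in place. The only point requiring care — and it is a trivial one — is recognizing that algebraic closure is precisely what supplies the root of $\chi_C$ needed to feed Proposition \ref{proproot}; everything else is bookkeeping with the definition of $m(C)$.
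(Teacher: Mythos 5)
Your proof is correct and follows exactly the paper's route: the paper derives the theorem directly from Lemma \ref{lemm=1} (handling $C=T$ and ruling out $m(C)=1$ when $C\neq T$) and Proposition \ref{proproot} (with algebraic closure supplying the root of $\chi_C$ to get $T\subseteq CC^{-1}$). You have merely spelled out the bookkeeping that the paper leaves implicit.
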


The converse of Proposition \ref{proproot} does not hold as the following example shows. Suppose that $K=\mathbb{F}_2$ and $n=4$. Let $C$ be the $G$-class of $[X^4+X^2+1]$. Set
\[\sigma:=\begin{pmatrix}&1&1&\\&1&&1\\1&&1&\\&1&&\end{pmatrix}\in G\text{ and }\tau:=\begin{pmatrix}&&1&1\\&1&&1\\1&&1&\\&1&&\end{pmatrix}\in G.\]
Clearly $\sigma=t_{12}(1)\tau$ and hence $\sigma\tau^{-1}=t_{12}(1)$. We leave it to the reader to check that $F(\sigma)=F(\tau)=[X^4+X^2+1]$ which implies that $\sigma,\tau\in C$. It follows that $T\subseteq CC^{-1}$ although $\chi_C=X^4+X^2+1$ has no root in $K$.
However, a ``weak converse" of Proposition \ref{proproot} does hold. Namely if $C$ is a noncentral $H$-class such that $T\subseteq CC^{-1}$, then $\chi_C$ is reducible as Proposition \ref{propweakcon} below shows. 

In Proposition \ref{propweakcon} we use the following notation. If $\sigma\in G$, $1\leq i_1<\dots<i_k\leq n$ and $1\leq j_1<\dots<j_k\leq n$, then $\sigma^{i_1,\dots,i_k}_{j_1,\dots,j_k}$ denotes the $k\times k$ matrix whose entry at position $(s,t)$ is $\sigma_{i_s,j_t}$. Recall that $\det(\sigma^{i_1,\dots,i_k}_{i_1,\dots,i_k})$, where $1\leq i_1<\dots<i_k\leq n$, is called a \textit{principal minor of size $k$}. It is well-known that 
\[\chi_\sigma=\sum\limits_{k=0}^n (-1)^ka_kX^{n-k}
\]
where $a_k$ is the sum of all principal minors of size $k$.

\begin{proposition}\label{propweakcon}
Let $C$ be a noncentral $H$-class. If $T\subseteq CC^{-1}$, then $\chi_C$ is reducible.
\end{proposition}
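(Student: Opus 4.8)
The plan is to prove the contrapositive: assuming $\chi_C$ is irreducible, I will show $t_{12}(1)\notin CC^{-1}$, whence $T\not\subseteq CC^{-1}$. First I unwind the hypothesis. Since $t_{12}(1)\in T$, the inclusion $T\subseteq CC^{-1}$ would give a factorisation $t_{12}(1)=g_1g_2$ with $g_1\in C$ and $g_2\in C^{-1}$; writing $g_1=\sigma_1$ and $g_2=\tau^{-1}$ with $\sigma_1,\tau\in C$, I obtain two matrices in the same $H$-class with
\[\sigma_1=t_{12}(1)\tau=\tau+e^{12}\tau.\]
Thus $\sigma_1$ arises from $\tau$ by adding the second row of $\tau$ to its first row, while every other row is unchanged, and $\chi_{\sigma_1}=\chi_\tau=\chi_C$ because $\sigma_1\sim\tau$.

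The key step is to exploit $\chi_{\sigma_1}=\chi_\tau$. Since $\sigma_1$ and $\tau$ agree outside the first row, I compare principal minors through $\chi_\sigma=\sum_{k=0}^n(-1)^ka_kX^{n-k}$. Expanding each size-$k$ principal minor of $\sigma_1$ by multilinearity in its first row (which is row $1$ of $\tau$ plus row $2$ of $\tau$) and discarding the terms carrying two equal rows, I expect the surviving cross terms to come only from index sets $I=\{1,i_2,\dots,i_k\}$ with $3\leq i_2<\dots<i_k\leq n$, giving
\[a_k(\sigma_1)-a_k(\tau)=\sum_{I}\det\bigl(\tau^{\,2,i_2,\dots,i_k}_{\,1,i_2,\dots,i_k}\bigr).\]
Equivalently, and more usefully, the whole difference packages into the single polynomial identity $\chi_{\sigma_1}(X)-\chi_\tau(X)=-(e_2^{\,T}\tau)\,\operatorname{adj}(Xe-\tau)\,e_1$, where $e_1,\dots,e_n$ is the standard basis of $K^n$ and $e_2^T\tau$ is the second row of $\tau$. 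As the two characteristic polynomials coincide, this polynomial vanishes identically; expanding $\operatorname{adj}(Xe-\tau)=\chi_\tau(X)(Xe-\tau)^{-1}=\chi_\tau(X)\sum_{m\geq0}\tau^mX^{-m-1}$ then forces every ``moment'' $(\tau^{\,j})_{21}=e_2^{\,T}\tau^{\,j}e_1$ to vanish for all $j\geq1$ (and trivially for $j=0$).

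From here the conclusion is immediate. The vanishing of all $(\tau^{\,j})_{21}$ says exactly that each vector $\tau^{\,j}e_1$ has zero second coordinate, so the $\tau$-invariant subspace $W:=\operatorname{span}_K\{e_1,\tau e_1,\dots,\tau^{\,n-1}e_1\}$ lies in the hyperplane $\{v:v_2=0\}$. Hence $W$ is a $\tau$-invariant subspace of $K^n$ that is nonzero (it contains $e_1$) and proper (it misses $e_2$). Choosing a basis adapted to $W$ puts $\tau$ in block upper triangular form, so $\chi_C=\chi_\tau=\det(Xe-\tau|_W)\cdot\det(Xe-\tau|_{K^n/W})$ is a product of two monic polynomials of degrees $\dim_K W$ and $n-\dim_K W$, each strictly between $0$ and $n$. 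Therefore $\chi_C$ is reducible, contradicting the assumed irreducibility and proving the proposition.

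I expect the main obstacle to be the bookkeeping of the middle paragraph: verifying the multilinear expansion of the principal minors of $\sigma_1$ cleanly (identifying precisely which index sets survive, and with the correct sign) and assembling it into the rank-one determinant identity, together with the standard but slightly delicate passage from ``the polynomial $(e_2^T\tau)\operatorname{adj}(Xe-\tau)e_1$ vanishes'' to ``all moments $(\tau^{\,j})_{21}$ vanish''. Once that identity is in hand, the production of a proper nonzero invariant subspace and the resulting factorisation of $\chi_C$ are routine.
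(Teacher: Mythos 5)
Your proof is correct, and it takes a genuinely different route from the paper's. Both arguments start from $\sigma=t_{12}(1)\tau$ with $\sigma,\tau\in C$ and exploit $\chi_\sigma=\chi_\tau$, but the paper works in coordinates: it conjugates this equation by elementary matrices commuting with $t_{12}(1)$ to normalise $\tau$ into a Hessenberg-type shape ($\tau_{ij}=0$ for $i\geq 2$, $j\geq i+2$, and $\tau_{21}=\tau_{22}=0$, the last step already using irreducibility to secure $\tau_{23}\neq 0$), and then runs a double induction comparing, for each size $k$, the sums of principal minors of $\sigma$ and $\tau$, showing successively that $\tau_{k1}=0$ for all $k$, so that the normalised $\chi_C$ acquires a linear factor --- a contradiction. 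You compress all of that bookkeeping into the matrix determinant lemma: $t_{12}(1)\tau-\tau=uv^{\top}$ has rank one (with $u$ the first standard basis vector and $v^{\top}$ the second row of $\tau$), so $\chi_\sigma-\chi_\tau=-v^{\top}\operatorname{adj}(Xe-\tau)u$, and since $\operatorname{adj}(Xe-\tau)=\chi_\tau(X)\sum_{m\geq 0}\tau^m X^{-m-1}$ holds in $K((X^{-1}))$, which is a field in which the nonzero $\chi_\tau$ may be cancelled, the equality $\chi_\sigma=\chi_\tau$ forces $(\tau^{j})_{21}=0$ for all $j\geq 1$ (and trivially $j=0$); the cyclic subspace $K[\tau]e_1$ is then a nonzero $\tau$-invariant subspace inside the hyperplane $\{w: w_2=0\}$, and block-triangularising over it factors $\chi_C$ into two monic polynomials of positive degrees. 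The two steps you flag as delicate are both solid --- the rank-one identity is the standard $\det(A+uv^{\top})=\det A+v^{\top}\operatorname{adj}(A)u$, and coefficient extraction in formal Laurent series is legitimate --- and your exploratory principal-minor paragraph can simply be deleted, since the adjugate identity supersedes it (for the record, its index-set claim and signs are right). What your route buys: no normalisation step, no case analysis or double induction, validity already for $n=2$, no use of the noncentrality hypothesis (irreducibility of $\chi_C$ implies noncentrality anyway), and a structural explanation of why only reducibility --- and not a root --- can be concluded, consistent with the paper's example over $\F_2$ with $\chi_C=X^4+X^2+1$: the invariant subspace may have any dimension strictly between $0$ and $n$. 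What the paper's route buys is elementarity: it needs nothing beyond Laplace expansion and explicit conjugations, whereas yours invokes the adjugate and the resolvent expansion.
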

\begin{proof}
Suppose there is a noncentral $H$-class $C$ such that $T\subseteq CC^{-1}$ and $\chi_C$ is irreducible. We will show that this assumption leads to a contradiction. Let $\sigma,\tau\in C$ such that $t_{12}(1)=\sigma\tau^{-1}$. It follows that 
\begin{equation}
\sigma=t_{12}(1)\tau.
\end{equation}
Clearly we may assume that 
\begin{equation}
\tau_{ij}=0 \text{ for any }i\geq 2 \text{ and } j\geq i+2
\end{equation} 
(conjugate Equation (4) by appropriate elements of $E$ commuting with $t_{12}(1)$). 
Since $\sigma,\tau\in C$, we have $\tr(\sigma)=\tr(\tau)$. Hence $\tr(\sigma)\overset{(4)}{=}\tr(t_{12}(1)\tau)=\tr(\tau)+\tau_{21}=\tr(\sigma)+\tau_{21}$ whence $\tau_{21}=0$. Suppose that $\tau_{23}=0$. Then all nondiagonal entries of $\tau$ in the second row are $0$. Therefore $\chi_\tau=\chi_C$ has a linear factor, which contradicts the assumption that $\chi_C$ is irreducible. Hence $\tau_{23}\neq 0$ and therefore we may assume that $\tau_{22}=0$ (conjugate (4) by $t_{32}(-\tau_{23}^{-1}\tau_{22})$. Hence
\[
\sigma=\begin{pmatrix}\tau_{11}&\tau_{12}&\tau_{13}&\dots&\dots&\tau_{1n}\\0&0&\tau_{23}&0&\dots&0\\\tau_{31}&\tau_{32}&\tau_{33}&\ddots&\ddots&\vdots\\\vdots&\vdots&\vdots&\ddots&\ddots&0\\\vdots&\vdots&\vdots&&\ddots&\tau_{n-1,n1}\\\tau_{n1}&\tau_{n2}&\tau_{n3}&\dots&\dots&\tau_{nn}\end{pmatrix}
\]
and, because of (4), 
\[
\tau=\begin{pmatrix}\tau_{11}&\tau_{12}&\tau_{13}+\tau_{23}&\dots&\dots&\tau_{1n}\\0&0&\tau_{23}&0&\dots&0\\\tau_{31}&\tau_{32}&\tau_{33}&\ddots&\ddots&\vdots\\\vdots&\vdots&\vdots&\ddots&\ddots&0\\\vdots&\vdots&\vdots&&\ddots&\tau_{n-1,n1}\\\tau_{n1}&\tau_{n2}&\tau_{n3}&\dots&\dots&\tau_{nn}\end{pmatrix}
\]
Consider the statement 
\[P(k):~\tau_{k1}=0.\]
We will show by induction on $k$ that $P(k)$ holds for any $3\leq k\leq n$.\\
\\
\underline{$k=3$} Since $\sigma,\tau\in C$, we have $\chi_\sigma=\chi_\tau$. We compare the coefficients of $X^{n-2}$ in $\chi_\sigma$ and $\chi_\tau$, namely the sums of all principal minors of size $2$. Suppose $\det(\sigma^{i_1,i_2}_{i_1,i_2})\neq \det(\tau^{i_1,i_2}_{i_1,i_2})$. Then clearly $i_1=1$ and $i_2=3$. Moreover, $\det(\sigma^{1,3}_{1,3})-\det(\tau^{1,3}_{1,3})=-\tau_{23}\tau_{31}$. Hence $\tau_{23}\tau_{31}=0$. Since clearly $\tau_{23}\neq 0$, we obtain $\tau_{31}=0$. Thus $P(3)$ holds. \\
\\
\underline{$k\rightarrow k+1$} Suppose that $P(3),\dots,P(k)$ hold for some $k\in \{3,\dots,n-1\}$. We will show that $P(k+1)$ holds. We compare the coefficients of $X^{n-k}$ in $\chi_\sigma$ and $\chi_\tau$, namely the sums of all principal minors of size $k$ (multiplied by $(-1)^k$). Suppose $\det(\sigma^{i_1,\dots,i_{k}}_{i_1,\dots,i_{k}})\neq \det(\tau^{i_1,\dots,i_{k}}_{i_1,\dots,i_{k}})$. Then clearly $i_1=1$ and $i_2=3$. Moreover, 
\[\det(\sigma^{1,3,i_3\dots,i_{k}}_{1,3,i_3\dots,i_{k}})-\det(\tau^{1,3,i_3\dots,i_{k}}_{1,3,i_3\dots,i_{k}})=\tau_{23}\det(\sigma^{3,i_3\dots,i_{k}}_{1,i_3\dots,i_{k}})\] (consider the Laplace expansion along the first row). In view of (5) we have
\[\sigma^{3,i_3\dots,i_{k}}_{1,i_3\dots,i_{k}}=\begin{pmatrix}\tau_{31}&\tau_{3i_3}&0&\dots&0\\\tau_{i_31}&\tau_{i_3i_3}&\tau_{i_3i_4}&\ddots&\vdots\\\tau_{i_41}&\tau_{i_4i_3}&\tau_{i_4i_4}&\ddots&0\\\vdots&\vdots&\vdots&\ddots&\tau_{i_{k-1}i_k}\\\tau_{i_k1}&\tau_{i_ki_3}&\tau_{i_ki_4}&\dots&\tau_{i_ki_k}\end{pmatrix}.
\]
Consider the statement 
\[Q(j):~i_j=j+1.\]
We will show by induction on $j$ that $Q(j)$ holds for any $3\leq j\leq k$.\\
\\
\underline{$j=3$} Assume that $i_3>4$. Then $\tau_{3i_3}=0$ by (5). But $\tau_{31}=0$ by $P(3)$, whence $\det(\sigma^{1,3,i_3\dots,i_{k}}_{1,3,i_3\dots,i_{k}})-\det(\tau^{1,3,i_3\dots,i_{k}}_{1,3,i_3\dots,i_{k}})=\tau_{23}\det(\sigma^{3,i_3\dots,i_{k}}_{1,i_3\dots,i_{k}})=0$. But this contradicts the assumption that $\det(\sigma^{i_1,\dots,i_{k}}_{i_1,\dots,i_{k}})\neq \det(\tau^{i_1,\dots,i_{k}}_{i_1,\dots,i_{k}})$. Hence $i_3=4$ and thus $Q(3)$ holds.\\
\\
\underline{$j\rightarrow j+1$} Suppose that $Q(3),\dots,Q(j)$ hold for some $j\in \{3,\dots,k-1\}$. Write 
\[\sigma^{1,3,i_3\dots,i_{k}}_{1,3,i_3\dots,i_{k}}=\begin{pmatrix}A&B\\D&E\end{pmatrix}\]
where $A\in \Mat_{(j-1)\times(j-1)}(K)$, $B\in \Mat_{(j-1)\times(n-j+1)}(K)$, $D\in \Mat_{(n-j+1)\times(j-1)}(K)$ and $E\in \Mat_{(n-j+1)\times(n-j+1)}(K)$. Then 
\[A=\begin{pmatrix}\tau_{31}&\tau_{34}&0&\dots&0\\\tau_{41}&\tau_{44}&\tau_{45}&\ddots&\vdots\\\tau_{51}&\tau_{54}&\tau_{55}&\ddots&0\\\vdots&\vdots&\vdots&\ddots&\tau_{j,j+1}\\\tau_{j+1,1}&\tau_{j+1,4}&\tau_{j+1,5}&\dots&\tau_{j+1,j+1}\end{pmatrix}\]
and $B$ is the matrix whose entry at position $(j-1,1)$ is $\tau_{j+1,i_{j+1}}$ and whose other entries are zero. Assume that $i_{j+1}>j+2$. Then $\tau_{j+1,i_{j+1}}=0$ by (5). Hence $\det \sigma^{1,3,i_3\dots,i_{k}}_{1,3,i_3\dots,i_{k}}=\det(A)\det(E)$. But $\tau_{31}=\dots =\tau_{j+1,1}=0$ by $P(3),\dots, P(j+1)$ whence $\det(A)=0$. It follows that $\det(\sigma^{1,3,i_3\dots,i_{k}}_{1,3,i_3\dots,i_{k}})-\det(\tau^{1,3,i_3\dots,i_{k}}_{1,3,i_3\dots,i_{k}})=\tau_{23}\det(\sigma^{3,i_3\dots,i_{k}}_{1,i_3\dots,i_{k}})=0$. But this contradicts the assumption that $\det(\sigma^{i_1,\dots,i_{k}}_{i_1,\dots,i_{k}})\neq \det(\tau^{i_1,\dots,i_{k}}_{i_1,\dots,i_{k}})$. Hence $i_{j+1}=j+2$ and thus $Q(j+1)$ holds.\\
\\
We have shown that $Q(j)$ holds for any $3\leq j\leq k$. Hence
\[\sigma^{3,i_3\dots,i_{k}}_{1,i_3\dots,i_{k}}=\sigma^{3,4\dots,k+1}_{1,4\dots,k+1}=\begin{pmatrix}\tau_{31}&\tau_{34}&0&\dots&0\\\tau_{41}&\tau_{44}&\tau_{45}&\ddots&\vdots\\\tau_{51}&\tau_{54}&\tau_{55}&\ddots&0\\\vdots&\vdots&\vdots&\ddots&\tau_{k,k+1}\\\tau_{k+1,1}&\tau_{k+1,4}&\tau_{k+1,5}&\dots&\tau_{k+1,k+1}\end{pmatrix}.
\]
Since $\tau_{31}=\dots=\tau_{k1}=0$ by $P(3),\dots ,P(k)$, it follows that 
\begin{equation}
\tau_{23}\det(\sigma^{3,i_3\dots,i_{k}}_{1,i_3\dots,i_{k}})=(-1)^k\tau_{23}\tau_{k+1,1}\tau_{34}\tau_{45}\dots\tau_{k,k+1}.
\end{equation}
Suppose that $\tau_{j,j+1}=0$ for some $3\leq j\leq k$. Then 
\[\sigma=\begin{pmatrix}A&B&C\\0&E&0\\F&G&I\end{pmatrix}\sim \begin{pmatrix}E&0\\ *&*\end{pmatrix}
\]
where $A\in \Mat_{1\times 1}(K)$, $B\in \Mat_{1\times (j-1)}(K)$, $C\in \Mat_{1\times (n-j)}(K)$, $E\in \Mat_{(j-1)\times (j-1)}(K)$, $F\in \Mat_{(n-j)\times 1}(K)$, $G\in \Mat_{(n-j)\times (j-1)}(K)$ and $I\in \Mat_{(n-j)\times (n-j)}(K)$. It follows that $\chi_\sigma=\chi_C$ has a factor of degree $j-1$, which contradicts the assumption that $\chi_C$ is irreducible. Hence $\tau_{34},\dots,\tau_{k,k+1}\neq 0$. It follows from (6) that $\tau_{k+1,1}=0$. Thus $P(k+1)$ holds. \\
\\
We have shown that $P(k)$ holds for any $3\leq k\leq n$. Hence all nondiagonal entries of $\sigma$ in the first column are zero and hence $\chi_\sigma=\chi_C$ has a linear factor. But this contradicts the assumption that $\chi_C$ is irreducible.
\end{proof}

The next proposition implies that $m(C)\leq 4$ for any noncentral $H$-class.
\begin{proposition}\label{propfour}
Let $C$ be a noncentral $H$-class. Then $T\subseteq CC^{-1}CC^{-1}$.
\end{proposition}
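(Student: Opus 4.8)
The plan is to reduce the four-fold product to the two-fold case already handled in Proposition \ref{proproot}. The key structural observation is that $CC^{-1}$ is both symmetric and invariant under $H$-conjugation: since $C$ and $C^{-1}$ are $H$-classes we have $(CC^{-1})^h = C^h (C^{-1})^h = CC^{-1}$ for every $h\in H$, and $(CC^{-1})^{-1} = CC^{-1}$ directly from the definition. Consequently, if $\rho\in CC^{-1}$ then its whole $H$-class $D:=\rho^H$ and the inverse class $D^{-1}=(\rho^{-1})^H$ are again contained in $CC^{-1}$, so that $DD^{-1}\subseteq (CC^{-1})(CC^{-1}) = CC^{-1}CC^{-1}$. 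Hence it suffices to exhibit a single noncentral element $\rho\in CC^{-1}$ whose characteristic polynomial has a root in $K$: applying Proposition \ref{proproot} to the noncentral $H$-class $D=\rho^H$ then yields $T\subseteq DD^{-1}\subseteq CC^{-1}CC^{-1}$.

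To produce such a $\rho$ I would arrange for it to have the eigenvalue $1$. By Lemma \ref{lem3} we may replace $C$ by any $G$-conjugate, so the representative $\sigma\in C$ can be chosen conveniently. Since $\sigma$ is noncentral it is not scalar, so there is a vector whose image under $\sigma$ is linearly independent from it; extending this pair to a basis, we may assume $\sigma e_1 = e_2$, i.e. the first column of $\sigma$ is $e_2$ (in particular $\sigma_{21}=1$). Now set $\mu:=t_{13}(1)\in E\leq H$ (here $n\geq 3$ is used), which fixes both $e_1$ and $e_2$, and put $\sigma_2:=\sigma^{\mu}\in C$ and $\rho:=\sigma\sigma_2^{-1}\in CC^{-1}$. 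Because $\mu$ fixes $e_1$ and $e_2$ we get $\sigma_2 e_1 = \mu^{-1}\sigma\mu e_1 = \mu^{-1}\sigma e_1 = \mu^{-1}e_2 = e_2 = \sigma e_1$, whence $\sigma_2^{-1}e_2 = e_1$ and therefore $\rho e_2 = \sigma\sigma_2^{-1}e_2 = \sigma e_1 = e_2$. Thus $1$ is a root of $\chi_\rho$ lying in $K$.

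It remains to check that $\rho$ is noncentral, which is the only delicate point. We have $\rho=e$ exactly when $\sigma=\sigma_2$, i.e. when $\mu$ centralises $\sigma$; and a short computation shows that $\mu=t_{13}(1)$ commutes with $\sigma$ only if the off-diagonal entries of the first column of $\sigma$ vanish, contradicting $\sigma_{21}=1$. Hence $\rho\neq e$, and since $\rho$ already has the eigenvalue $1$ it cannot be a nontrivial scalar either; therefore $\rho$ is noncentral. This completes the construction, and the reduction above finishes the proof. The main obstacle is precisely guaranteeing noncentrality of the constructed $\rho$ while keeping its characteristic polynomial split enough to invoke Proposition \ref{proproot}; the normalisation $\sigma e_1 = e_2$ together with the specific choice $\mu=t_{13}(1)$ is what makes both conditions hold simultaneously for every $n\geq 3$.
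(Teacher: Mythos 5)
Your proof is correct, but it takes a genuinely different route from the paper's. The paper normalises $\sigma$ to Frobenius form (Theorem \ref{thmfro} plus Lemma \ref{lem3}), applies Proposition \ref{proproot} directly when some invariant factor has degree $1$, and otherwise splits into two cases (all invariant factors of degree $2$, resp.\ one of degree $\geq 3$), in each of which an explicit double commutator --- $[[\sigma,t_{14}(1)],t_{12}(1)]=t_{13}(-a_0^{-1})$, resp.\ $[[\sigma,t_{t-1,t}(1)],t_{t-1,t}(1)]=t_{t-1,1}(-a_0^{-1})$ --- is computed to be a nontrivial elementary transvection and then placed in $CC^{-1}CC^{-1}$ via the group identity $[[a,b],c]=a\,(a^{-1})^{b^{-1}}a^{b^{-1}c^{-1}}(a^{-1})^{c^{-1}}$. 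You instead bootstrap Proposition \ref{proproot}: after normalising $\sigma e_1=e_2$ (legitimate by Lemma \ref{lem3}, since a noncentral element is nonscalar), the single commutator $\rho=\sigma(\sigma^{t_{13}(1)})^{-1}\in CC^{-1}$ fixes $e_2$, hence has eigenvalue $1$, and is noncentral because $t_{13}(1)$ centralises no $\sigma$ with $\sigma_{21}=1$ and a nontrivial scalar cannot fix $e_2$; then the closure properties $(CC^{-1})^h=CC^{-1}$ and $(CC^{-1})^{-1}=CC^{-1}$ give $D\cup D^{-1}\subseteq CC^{-1}$ for $D=\rho^H$, so Proposition \ref{proproot} applied to $D$ yields $T\subseteq DD^{-1}\subseteq CC^{-1}CC^{-1}$. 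Your argument is shorter and uniform --- no case analysis on the Frobenius form, no companion-matrix computations --- at the cost of invoking the case-heavy Proposition \ref{proproot} a second time as a black box; the paper's version is more explicit, exhibiting a concrete transvection in $CC^{-1}CC^{-1}$ together with the conjugators. It is worth noting the two proofs share the same underlying mechanism: the paper's identity for $[[a,b],c]$ is exactly the observation that $[x,c]=x\,(x^{-1})^{c^{-1}}\in DD^{-1}$ for $x=[a,b]\in CC^{-1}$, which is the reduction you made explicit and then exploited abstractly.
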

\begin{proof}
Choose a $\sigma\in C$. By Theorem \ref{thmfro} and Lemma \ref{lem3} we may assume that $\sigma$ is in Frobenius form, i.e. there are nonconstant, monic polynomials $P_1,\dots,P_r\in K[X]$ such that $P_1|P_2|\dots|P_r$ and $\sigma=[P_1]\oplus\dots\oplus [P_r]$. If one of the $P_i$'s has degree $1$, then $T\subseteq CC^{-1}\subseteq CC^{-1}CC^{-1}$ by Proposition \ref{proproot} (since $\chi_C=P_1\dots P_r$). Hence we may assume that the degree of each $P_i$ is at least $2$.
\begin{enumerate}[{\bf Case} 1]
\item Suppose each $P_i$ has degree $2$. It follows that $P_1=P_2=\dots=P_r=X^2-a_1X-a_0$ for some $a_0,a_1\in K$ since $P_1|P_2|\dots|P_r$. Hence
\[\sigma=\left(\begin{array}{cc|cc|c}&a_0&&&\\1&a_1&&&\\\hline &&&a_0&\\&&1&a_1&\\\hline&&&&*\end{array}\right)\]
and
\[\sigma^{-1}=\left(\begin{array}{cc|cc|c}-a^{-1}_0a_1&1&&&\\a_0^{-1}&&&&\\\hline &&-a^{-1}_0a_1&1&\\&&a^{-1}_0&&\\\hline&&&&*\end{array}\right).\]
One checks easily that $[\sigma,t_{14}(1)]=t_{23}(a_0^{-1})t_{14}(-1)$ which implies that $[[\sigma,t_{14}(1)],$ $t_{12}(1)]=t_{13}(-a_0^{-1})$. It follows from the formula 
\begin{equation}
[[a,b],c]=a (a^{-1})^{b^{-1}}a^{b^{-1}c^{-1}} (a^{-1})^{c^{-1}},
\end{equation}
which holds for elements $a,b,c$ of any group, that $t_{13}(-a_0^{-1})\in  CC^{-1}CC^{-1}$. Thus $T\subseteq  CC^{-1}CC^{-1}$ by Lemma \ref{lem1}.
\item Suppose that there is an $i$ such that $P_i$ has degree $t\geq 3$. By Lemma \ref{lem3} we may assume that $i=1$. Write $P_1=X^t-a_{t-1}X^{t-1}-\dots-a_1X-a_0$. Then 
\[\sigma=\left(\begin{array}{ccccc|c}&&&&a_0&\\1&&&&a_1&\\&1&&&a_2&\\&&\ddots&&\vdots&\\&&&1&a_{t-1}&\\\hline&&&&&*\end{array}\right)\]
and
\[\sigma^{-1}=\left(\begin{array}{ccccc|c}-a_0^{-1}a_1&1&&&&\\-a_0^{-1}a_2&&1&&&\\\vdots&&&\ddots&&\\-a_0^{-1}a_{t-1}&&&&1&\\a_0^{-1}&&&&&\\\hline&&&&&*\end{array}\right)\]
One checks easily that $[\sigma,t_{t-1,t}(1)]=t_{t1}(a_0^{-1})t_{t-1,t}(-1)$ which implies that $[[\sigma,t_{t-1,t}(1)],t_{t-1,t}(1)]=t_{t-1,1}(-a_0^{-1})$ (since $t\geq 3$). It follows from Equation (7) that $t_{t-1,1}(-a_0^{-1})\in CC^{-1}CC^{-1}$. Thus $T\subseteq  CC^{-1}CC^{-1}$ by Lemma \ref{lem1}.
\end{enumerate}
\end{proof}

Recall that if $P=a_nX^n+\dots +a_1X+a_0\in K[X]$ is a polynomial of degree $n$, then $P^*=a_0X^n+\dots +a_{n-1}X+a_n\in K[X]$ is called the \textit{reciprocal polynomial} of $P$. If $\sigma\in G$, then $\chi_{\sigma^{-1}}=\frac{\chi_\sigma^*}{\det(\sigma)}$. It follows that $\chi_\sigma$ is irreducible iff $\chi_{\sigma^{-1}}$ is irreducible (note that $(P^*)^*=P$ and $(PQ)^*=P^*Q^*$ for any $P,Q\in K[X]$ with nonzero constant coefficient). 
\newpage

\begin{theorem}\label{thmeisen}
Let $C$ be a noncentral $H$-class. If $C\neq T$, $\chi_C$ is irreducible, $\det(C)^2\neq 1$ and $\det(C)^3\neq 1$, then $m(C)=4$.
\end{theorem}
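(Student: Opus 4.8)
The plan is to exploit the upper bound already in hand and then eliminate every smaller value of $m(C)$. By Proposition \ref{propfour} we have $m(C)\leq 4$, so it suffices to show that $m(C)\neq 1$, $m(C)\neq 2$ and $m(C)\neq 3$; then $m(C)=4$ follows. The case $m(C)=1$ is immediate from Lemma \ref{lemm=1}, since $m(C)=1$ is equivalent to $C=T$, which is excluded by hypothesis.

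The engine for the remaining cases is a determinant count. Since $t_{12}(1)\in\SL_n(K)=E$, every element of $T$ has determinant $1$, whereas every element of a product $C^{i_1}\dots C^{i_m}$ has determinant $\det(C)^{i_1+\dots+i_m}$ (all elements of the class $C^{i_j}$ share the determinant $\det(C)^{i_j}$). Hence $T\subseteq C^{i_1}\dots C^{i_m}$ forces $\det(C)^{i_1+\dots+i_m}=1$. For $m=3$ the exponent sum $s=i_1+i_2+i_3$ lies in $\{\pm1,\pm3\}$: if $|s|=1$ then $\det(C)=1$, contradicting $\det(C)^2\neq 1$, while if $|s|=3$ then $\det(C)^3=1$, contradicting the hypothesis $\det(C)^3\neq 1$. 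So no length-$3$ product can contain $T$, giving $m(C)\neq 3$.

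For $m=2$ the four candidate products are $CC$, $CC^{-1}$, $C^{-1}C$ and $C^{-1}C^{-1}$. The products $CC$ and $C^{-1}C^{-1}$ have exponent sum $\pm2$, so the determinant count would require $\det(C)^2=1$, which is excluded. For $CC^{-1}$, Proposition \ref{propweakcon} shows that $T\subseteq CC^{-1}$ would make $\chi_C$ reducible, contrary to hypothesis. The one case that needs a little extra care is $C^{-1}C$, since it is not directly covered by Proposition \ref{propweakcon} (which concerns $CC^{-1}$) nor by the determinant count. Here I would pass to the auxiliary noncentral $H$-class $D:=C^{-1}$, so that $C^{-1}C=DD^{-1}$; by the reciprocal-polynomial remark preceding the theorem, $\chi_D=\chi_{C^{-1}}$ is irreducible because $\chi_C$ is. Thus $T\subseteq DD^{-1}$ would again force $\chi_D$ to be reducible by Proposition \ref{propweakcon}, a contradiction. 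Consequently $m(C)\neq 2$.

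The main obstacle is precisely this $C^{-1}C$ case: it is the only length-$\leq 2$ or length-$3$ product whose exclusion rests not on a determinant obstruction but on reducing to the ``weak converse'' Proposition \ref{propweakcon} via the class $D=C^{-1}$ and the equivalence of irreducibility for $\chi_C$ and $\chi_{C^{-1}}$. Having ruled out $m(C)\in\{1,2,3\}$ and knowing $m(C)\leq 4$, we conclude $m(C)=4$.
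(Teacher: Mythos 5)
Your proof is correct and follows essentially the same route as the paper: Lemma \ref{lemm=1} rules out $m(C)=1$, the determinant obstruction kills $CC$, $C^{-1}C^{-1}$ and all length-$3$ products, Proposition \ref{propweakcon} (applied to $C$ and, via the reciprocal-polynomial remark, to $D=C^{-1}$ for the product $C^{-1}C$) handles the remaining length-$2$ cases, and Proposition \ref{propfour} supplies the upper bound. Your treatment of the length-$3$ exponent sums is in fact slightly more explicit than the paper's, which credits only $\det(C)^3\neq 1$ while the case $i_1+i_2+i_3=\pm 1$ actually needs $\det(C)\neq 1$, a consequence of $\det(C)^2\neq 1$ as you correctly note.
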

\begin{proof}
By Lemma \ref{lemm=1} we have $m(C)>1$. By Proposition \ref{propweakcon} we have $T\not\subseteq CC^{-1}$ since $\chi_C$ is irreducible. Similarly $T\not\subseteq C^{-1}C$ since $\chi_{C^{-1}}$ is irreducible by the paragraph right before Theorem \ref{thmeisen}. Clearly $T\not\subseteq CC$ and $T\not\subseteq C^{-1}C^{-1}$ since $\det(C)^2\neq 1$. Similarly $T\not\subseteq C^{i_1}C^{i_2}C^{i_3}$ for all $i_1,i_2,i_3\in\{\pm 1\}$ since $\det(C)^3\neq 1$. Hence $m(C)>3$. It follows from Proposition \ref{propfour} that $m(C)=4$.
\end{proof}

\begin{example}\label{exeisen}
Suppose $K=\mathbb{Q}$. Let $P=X^n+2\in K[X]$. Then $P$ is irreducible by Eisenstein's criterion. Let $C$ denote the $H$-class of $[P]$. Clearly $C\neq T$ since $F(C)=[P]$ but $F(T)=[X-1]\oplus\dots\oplus [X-1]\oplus[(X-1)^2]$ by Lemma \ref{lemfroel}. Moreover, $\chi_C=P$ and $\det(C)\in\{\pm 2\}$. It follows from Theorem \ref{thmeisen} that $m(C)=4$.
\end{example}

\section{The case $n=3$}
Set $G:=GL_3(K)$ and $E:=E_3(K)$. $H$ denotes a subgroup of $G$ containing $E$, and $T$ denotes the $H$-class of $t_{12}(1)$. We will determine $m(C)$ for any noncentral $H$-class $C$.

\begin{proposition}\label{proprootn=3}
Let $C$ be a noncentral $H$-class. Then the following are equivalent.
\begin{enumerate}[(i)]
\item $\chi_C$ has a root in $K$. 
\item $T\subseteq CC^{-1}$.
\item $T\subseteq C^{-1}C$.
\end{enumerate}
\end{proposition}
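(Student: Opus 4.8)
The plan is to reduce everything to two results already available for general $n\geq 3$: Proposition~\ref{proproot}, which supplies (i)$\Rightarrow$(ii), and Proposition~\ref{propweakcon}, whose converse role becomes usable precisely because $n=3$. The only genuinely new ingredient is an elementary fact about cubics.

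First I would exploit the symmetry between $C$ and $C^{-1}$. Every eigenvalue of a matrix in $\GL_3(K)$ is nonzero, and by the relation $\chi_{\sigma^{-1}}=\chi_\sigma^*/\det(\sigma)$ recorded just before Theorem~\ref{thmeisen}, the roots of $\chi_{C^{-1}}$ are exactly the inverses of the roots of $\chi_C$. Hence $\chi_C$ has a root in $K$ if and only if $\chi_{C^{-1}}$ does. Moreover $C^{-1}$ is again a noncentral $H$-class. Observing that $(C^{-1})^{-1}=C$, so that $C^{-1}(C^{-1})^{-1}=C^{-1}C$, statement (iii) for $C$ is literally statement (ii) applied to the class $C^{-1}$, while statement (i) is insensitive to replacing $C$ by $C^{-1}$. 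Consequently it suffices to prove the equivalence (i)$\Leftrightarrow$(ii); applying that equivalence to $C^{-1}$ then yields (i)$\Leftrightarrow$(iii) for $C$ with no further work.

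For (i)$\Rightarrow$(ii) there is nothing to do: this is exactly Proposition~\ref{proproot}. The heart of the matter is (ii)$\Rightarrow$(i). Assuming $T\subseteq CC^{-1}$, Proposition~\ref{propweakcon} shows that $\chi_C$ is reducible. Here I would invoke the single feature special to $n=3$: a reducible polynomial of degree $3$ over a field must split off a linear factor, since in any factorisation $\chi_C=fg$ into nonconstant polynomials the degrees satisfy $\deg f+\deg g=3$, forcing one of them to equal $1$. A linear factor of $\chi_C$ is precisely a root of $\chi_C$ in $K$, which is (i).

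The main (and essentially only) obstacle is recognising why the converse of Proposition~\ref{proproot} succeeds here when it fails in general. The $\mathbb{F}_2$, $n=4$ example discussed after the theorem on algebraically closed fields shows that reducibility alone does not produce a root; but for a cubic, reducibility and having a root in $K$ coincide, so Proposition~\ref{propweakcon} delivers (i) outright. Everything beyond this is bookkeeping around the $C\leftrightarrow C^{-1}$ symmetry.
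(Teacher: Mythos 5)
Your proposal is correct and follows essentially the same route as the paper: Proposition~\ref{proproot} gives (i)$\Rightarrow$(ii), Proposition~\ref{propweakcon} combined with the fact that a reducible cubic must have a linear factor gives (ii)$\Rightarrow$(i), and the reciprocal-polynomial relation $\chi_{\sigma^{-1}}=\chi_\sigma^*/\det(\sigma)$ transfers the equivalence to $C^{-1}$ to obtain (i)$\Leftrightarrow$(iii). The only (immaterial) difference is ordering: you set up the $C\leftrightarrow C^{-1}$ symmetry first, whereas the paper proves (i)$\Leftrightarrow$(ii) and then applies it to $C^{-1}$.
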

\begin{proof}
The implication $(i)\Rightarrow (ii)$ follows from Proposition \ref{proproot} and the implication $(ii)\Rightarrow (i)$ from Proposition \ref{propweakcon} (note that a polynomial $P\in K[X]$ of degree $3$ is reducible iff it has a root in $K$). Applying the equivalence $(i)\Leftrightarrow (ii)$ to $C^{-1}$ we obtain 
\[\chi_{C^{-1}}\text{ has a root in }K\Leftrightarrow T\subseteq C^{-1}C.\]
It follows from the paragraph right before Theorem \ref{thmeisen} that
\[\chi_{C^{-1}}\text{ has a root in }K\Leftrightarrow \chi_C\text{ has a root in }K.\]
Thus we have shown $(i)\Leftrightarrow (iii)$.
\end{proof}

Next we will consider $H$-classes $C$ such that $\chi_C$ does not have a root in $K$.
\begin{proposition}\label{propsquaren=3}
Let $C$ be an $H$-class such that $\chi_C$ does not have a root in $K$. Then the following are equivalent.
\begin{enumerate}[(i)]
\item $\det(C)^2=1$. 
\item $T\subseteq CC$.
\item $T\subseteq C^{-1}C^{-1}$.
\end{enumerate}
\end{proposition}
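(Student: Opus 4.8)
The plan is to dispatch the two easy implications $(ii)\Rightarrow(i)$ and $(iii)\Rightarrow(i)$ by a determinant count, and then to concentrate all the work on $(i)\Rightarrow(ii)$, deducing $(i)\Rightarrow(iii)$ from it by passing to $C^{-1}$. For the easy directions, every element of $CC$ has determinant $\det(C)^2$ and every element of $C^{-1}C^{-1}$ has determinant $\det(C)^{-2}$, while $\det(t_{12}(1))=1$; since $T\subseteq CC$ forces $t_{12}(1)\in CC$, we read off $\det(C)^2=1$, and similarly for $(iii)$. For the reduction of $(i)\Rightarrow(iii)$ to $(i)\Rightarrow(ii)$, note that $C^{-1}$ again satisfies the hypotheses: $\det(C^{-1})^2=\det(C)^{-2}=1$, and $\chi_{C^{-1}}$ has no root iff $\chi_C$ has none (the paragraph preceding Theorem \ref{thmeisen}, together with the fact that a cubic is reducible iff it has a root). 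Applying $(i)\Rightarrow(ii)$ to $C^{-1}$ then gives $T\subseteq C^{-1}C^{-1}$.

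So everything comes down to proving $T\subseteq CC$ assuming $d:=\det(C)\in\{\pm1\}$ and $\chi_C$ irreducible. Write $\chi_C=X^3-\tr(C)X^2+sX-d$, with $s$ the sum of the principal $2\times2$ minors, and set $\gamma:=\tr(C)-sd$. The first key point is that $\gamma\neq0$: since $d^2=1$ one computes $\chi_C(d)=-\gamma$, and $d$ is not a root of $\chi_C$. Using $\gamma\neq0$ I would write down an explicit matrix with $\chi_{\sigma_1}=\chi_C$, namely
\[\sigma_1=\begin{pmatrix}0&0&d\gamma^{-1}\\\gamma&\tr(C)+d&f\\0&1&-d\end{pmatrix},\qquad f=-(s+d\,\tr(C)+1),\]
chosen precisely so that $\sigma_2:=\sigma_1^{-1}t_{12}(1)$ again satisfies $\chi_{\sigma_2}=\chi_C$. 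The verifications $\chi_{\sigma_1}=\chi_{\sigma_2}=\chi_C$ are short computations in which the hypothesis $d^2=1$ is used essentially (it is what makes $\det(\sigma_2)=\det(\sigma_1)$ and forces the trace and minor-sum of $\sigma_2$ to agree with those of $\sigma_1$). As $\chi_C$ is irreducible, every matrix with characteristic polynomial $\chi_C$ is cyclic, hence $G$-conjugate to $[\chi_C]$; thus $\sigma_1,\sigma_2$ lie in a single $G$-class $\Omega$, and $\sigma_1\sigma_2=t_{12}(1)$ by construction.

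I expect the \textbf{main obstacle} to be that $C$ is an $H$-class, so having $\sigma_1,\sigma_2$ in the same $G$-class $\Omega$ is not enough: I must place them in the same $H$-class, and since $E\leq H$ it suffices to show they are $E$-conjugate, $E=\SL_3(K)$. This is genuine, since for irreducible $\chi_C$ the class $\Omega$ may split into several $\SL_3(K)$-classes (indexed by $K^\ast$ modulo the norms from $K[X]/(\chi_C)$). To handle it I would compute the conjugator explicitly: $e_1$ is a cyclic vector for both $\sigma_1$ and $\sigma_2$, so the frames $P=[\,e_1\mid\sigma_1e_1\mid\sigma_1^2e_1\,]$ and $Q=[\,e_1\mid\sigma_2e_1\mid\sigma_2^2e_1\,]$ conjugate $[\chi_C]$ to $\sigma_1$ resp.\ $\sigma_2$. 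A direct calculation gives $\det P=\gamma^2$ and $\det Q=-\gamma^2$, whence $\sigma_2=(QP^{-1})\sigma_1(QP^{-1})^{-1}$ with $\det(QP^{-1})=-1$. Because $n=3$ is odd, the central matrix $-e$ has determinant $-1$; multiplying the conjugator by $-e$ (which commutes with $\sigma_1$) yields a conjugator of determinant $1$, so $\sigma_1\sim_E\sigma_2$.

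Finally I would globalise to the given $H$-class $C$. Pick any $\sigma\in C$; since $\chi_C=\chi_{\sigma_1}$ is irreducible, $\sigma=\sigma_1^{\rho}$ for some $\rho\in G$. Writing $\sigma_2=\sigma_1^{g}$ with $g\in E$ and conjugating $\sigma_1\sigma_2=t_{12}(1)$ by $\rho$ gives $\sigma\cdot\sigma^{\rho^{-1}g\rho}=t_{12}(1)^{\rho}$. As $E=\SL_3(K)$ is normal in $G$, the exponent $\rho^{-1}g\rho$ again lies in $E\leq H$, so both factors lie in $\sigma^{E}\subseteq\sigma^{H}=C$, while $t_{12}(1)^{\rho}\in T$ by Lemma \ref{lem2}. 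Hence $T\cap CC\neq\emptyset$, and since $CC$ is closed under $H$-conjugation and $T$ is a single $H$-class, $T\subseteq CC$. (Alternatively, this last step can be phrased through Lemma \ref{lem3}.)
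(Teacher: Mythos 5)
Your proof is correct, and I verified the computations it leaves implicit: writing $\chi_C=X^3-tX^2+sX-d$ with $t=\tr(C)$ and $\gamma=t-sd$, one indeed has $\chi_C(d)=-\gamma$ (so $\gamma\neq 0$ by irreducibility), $\chi_{\sigma_1}=\chi_C$ (trace $t$, minor-sum $-d(t+d)-f=s$, determinant $d$), and for $\sigma_2=\sigma_1^{-1}t_{12}(1)$ one gets $\det(\sigma_2)=d^{-1}=d$, $\tr(\sigma_2)=sd+\gamma=t$ and $\tr(\sigma_2^{-1})=t-\gamma=sd$, so $\chi_{\sigma_2}=\chi_C$; likewise $\det P=\gamma^2$ and $\det Q=-\gamma^2$ check out. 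However, your route is genuinely different from the paper's. The paper first reduces to the companion matrix $\sigma$ (corner entry $a=\det(C)$ with $a^2=1$) via Lemma \ref{lem3}, then factors a conjugate of $\sigma$ as $\sigma_0\xi$ where $\xi\in E$ is an \emph{involution} and $\sigma_0^2\sim t_{12}(1)$; the identity $(\sigma_0\xi)^{\epsilon}(\sigma_0\xi)^{\epsilon\xi^{\epsilon}}=(\sigma_0^2)^{\epsilon}=t_{12}(1)$ then exhibits $t_{12}(1)$ as a product of two conjugates of $\sigma$ by \emph{explicit elements of $E$}, so the $\SL_3$-versus-$\GL_3$ conjugacy question never arises (Lemma \ref{lem2} supplies $\epsilon\in E$). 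You instead solve $\sigma_1\sigma_2=t_{12}(1)$ by matching characteristic polynomials — making visible that $d^2=1$ is exactly what forces $\det(\sigma_1^{-1}t_{12}(1))=\det(\sigma_1)$ and that $\gamma\neq 0$ is the nondegeneracy bought by irreducibility — and you then must, and correctly do, repair the possible splitting of the $G$-class into $\SL_3(K)$-classes (the norm-group obstruction you name) using the frame determinants $\gamma^2$, $-\gamma^2$ and the central matrix $-e$ of determinant $(-1)^3=-1$; in characteristic $2$ this correction is vacuous rather than wrong, since there $-1=1$ and the conjugator already lies in $\SL_3(K)$. What the paper's involution trick buys is brevity and reusability (the same skeleton drives Proposition \ref{propcuben=3}); what yours buys is transparency about where each hypothesis enters, an honest treatment of the $\SL$-class-splitting subtlety that the paper's construction sidesteps by design, and a clear indication of where oddness of $n$ is used. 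Your endgame via normality of $E=\SL_3(K)$ (equivalently, via Lemma \ref{lem3}) is sound.
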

\begin{proof}
We will first show  $(i)\Leftrightarrow (ii)$ and then $(i)\Leftrightarrow (iii)$.\\
The implication $(ii)\Rightarrow (i)$ is obvious since $\det (T)=1$. Suppose now that $\det(C)^2=1$. Choose a $\sigma\in C$. By Lemma \ref{lem3} we may assume that $\sigma$ is in Frobenius form. Since $\chi_C$ does not have a root in $K$, it is irreducible (because $n=3$). Hence the matrix $\sigma$ has only one invariant factor and therefore 
\[\sigma=\begin{pmatrix}&&a\\1&&b\\&1&c\end{pmatrix}\]
for some $a,b,c\in K$.
Note that $a=\det(\sigma)$ and hence $a^2=1$. Set
\[\sigma_0=\begin{pmatrix}&&a\\1&&1\\&1&-a\end{pmatrix}.\]
Then
\[\sigma_0^2=\begin{pmatrix}&a&-1\\&1&\\1&-a&2\end{pmatrix}.\]
One checks easily that $(\sigma_0^2)^{p_{12}t_{21}(a)}=[X-1]\oplus[(X-1)^2]=F(t_{12}(1))\sim t_{12}(1)$. Hence $\sigma_0^2\sim t_{12}(1)$ and therefore $\sigma_0^2\sim_E t_{12}(1)$ by Lemma \ref{lem2}. It follows that there is an $\epsilon\in E$ such that 
\begin{equation}
(\sigma_0^2)^\epsilon=t_{12}(1).
\end{equation}
Set 
\[\xi=\begin{pmatrix}-1&&-b-1\\&-1&a+c\\&&1\end{pmatrix}.\]
Clearly $\xi=t_{13}(-b-1)t_{23}(a+c)d_{12}(-1)=d_{12}(-1)t_{13}(b+1)t_{23}(-a-c)\in E$ and
\begin{equation}
\xi^2=e.
\end{equation}
One checks easily that 
\begin{equation}
\sigma_0\xi=\sigma^{d_{13}(-1)}.
\end{equation}
Clearly 
\begin{align*}
&\sigma^{d_{13}(-1)\epsilon}\sigma^{d_{13}(-1)\epsilon\xi^\epsilon}\\
\overset{(10)}{=}~&(\sigma_0\xi)^{\epsilon}(\sigma_0\xi)^{\epsilon\xi^\epsilon}\\
=~&\sigma_0^{\epsilon}\xi^{\epsilon}\xi^{-\epsilon}\sigma_0^{\epsilon}\xi^{\epsilon}\xi^\epsilon\\
\overset{(9)}{=}~&\sigma_0^{\epsilon}\sigma_0^{\epsilon}\\
\overset{(8)}{=}~&t_{12}(1).
\end{align*}
Hence $t_{12}(1)\in CC$ and therefore $T\subseteq CC$. Thus we have shown $(i)\Leftrightarrow (ii)$.\\
Applying the equivalence $(i)\Leftrightarrow (ii)$ to $C^{-1}$ we obtain 
\[\det(C^{-1})^2=1\Leftrightarrow T\subseteq C^{-1}C^{-1}.\]
But clearly
\[\det(C^{-1})^2=1\Leftrightarrow \det(C)^2=1.\]
Thus we have shown $(i)\Leftrightarrow (iii)$.
\end{proof}

\begin{lemma}\label{lemperm}
Let $a,d,f,x\in K^*$ and $b,c\in K$. Then
\[\begin{pmatrix}&&a\\d&&b\\&f&c\end{pmatrix}\sim_E\begin{pmatrix}&&d\\f&&a^{-1}bf\\&a&c\end{pmatrix}\sim_E\begin{pmatrix}&&ax^2\\dx^{-1}&&bx\\&fx^{-1}&c\end{pmatrix}.\]
\end{lemma}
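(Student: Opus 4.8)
The plan is to write the three matrices as
\[
M_1=\begin{pmatrix}&&a\\d&&b\\&f&c\end{pmatrix},\qquad
M_2=\begin{pmatrix}&&d\\f&&a^{-1}bf\\&a&c\end{pmatrix},\qquad
M_3=\begin{pmatrix}&&ax^2\\dx^{-1}&&bx\\&fx^{-1}&c\end{pmatrix},
\]
and, since $\sim_E$ is an equivalence relation, to reduce the asserted chain to the two statements $M_1\sim_E M_2$ and $M_1\sim_E M_3$; the middle equivalence then follows by symmetry and transitivity. As a preliminary check I would expand characteristic polynomials and observe that all three equal $X^3-cX^2-bfX-adf$. Moreover, because $a,d,f\in K^*$, each matrix is nonderogatory (the first standard basis vector is cyclic), so by Theorem \ref{thmfro} each has this polynomial as its single invariant factor and all three are $\GL_3(K)$-conjugate to its companion matrix. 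The only genuine work is therefore to realise these conjugations inside $\E=\SL_3(K)$ (recall $\E=\SL_3(K)$ by Lemma \ref{propdecomp}).

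The equivalence $M_1\sim_E M_3$ is the easy step. I would exhibit the diagonal matrix $D=\operatorname{diag}(x,1,x^{-1})$ and verify, via the rule $(DM_1D^{-1})_{ij}=D_{ii}(M_1)_{ij}D_{jj}^{-1}$, that $DM_1D^{-1}=M_3$. Since $\det D=1$ we have $D\in\E$, and this step is complete with no further adjustment.

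For $M_1\sim_E M_2$ I would first produce a diagonal conjugator. Comparing the cyclic bases that bring $M_1$ and $M_2$ to their common companion form identifies the change of frame as $\rho=\operatorname{diag}(1,f/d,a/d)$, and one checks directly that $\rho M_1\rho^{-1}=M_2$. Here lies the main obstacle: $\det\rho=af/d^{2}$ need not equal $1$, so $\rho$ need not lie in $\E$, and since $af/d^2$ need not be a cube, rescaling $\rho$ by a scalar cannot in general repair this. The key observation I would use is that $M_1$ is invertible with $\det M_1=adf$, so the scalar multiple $d^{-1}M_1$ lies in the centraliser of $M_1$ and has determinant $d^{-3}\cdot adf=af/d^{2}=\det\rho$. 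Setting $\rho'=\rho\,(d^{-1}M_1)^{-1}$ then gives $\det\rho'=1$, so $\rho'\in\E$, while $d^{-1}M_1$ commuting with $M_1$ yields $\rho'M_1(\rho')^{-1}=\rho M_1\rho^{-1}=M_2$.

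The hard part is thus not the conjugacy itself, which is visible from the shared Frobenius form, but the passage from $\GL_3(K)$ to $\E=\SL_3(K)$: the natural conjugators are diagonal and carry the ``wrong'' determinant. Absorbing this determinant defect into the centraliser of $M_1$ (using that $d^{-1}M_1$ has exactly the compensating determinant $af/d^2$ and leaves the conjugation unchanged) is the crux, and it is what makes the statement hold for all $a,d,f\in K^*$ and $b,c\in K$ without any extractability hypothesis on roots in $K$.
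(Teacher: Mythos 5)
Your proof is correct, and for the main equivalence it takes a genuinely different route from the paper. The paper's proof is pure verification: it exhibits the explicit elementary word $\epsilon=\hat p_{32}\hat p_{31}t_{23}(-a^{-1}c)t_{13}(a^{-1}b)d_{13}(-1)\in E$ and checks $M_1^{\epsilon}=M_2$ directly, realising the cyclic shift of the corner entries $a,d,f$ by signed permutations and cleaning up the last column with transvections; membership in $E$ is visible generator by generator, so no determinant bookkeeping is needed. You instead conjugate by the diagonal matrix $\rho=\operatorname{diag}(1,f/d,a/d)$, which does satisfy $\rho M_1\rho^{-1}=M_2$ (diagonal conjugation preserves the zero pattern, and the five relevant entries transform as you compute), and then absorb the determinant defect $\det\rho=af/d^{2}$ into the centraliser: $d^{-1}M_1$ commutes with $M_1$ and has determinant $d^{-3}\cdot adf=af/d^{2}$, so $\rho'=\rho\,(d^{-1}M_1)^{-1}$ lies in $\SL_3(K)=E$ and still conjugates $M_1$ to $M_2$. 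This centraliser trick is sound --- your observation that scalar rescaling only alters the determinant by cubes, hence cannot work in general, is exactly right, and the invertibility of $M_1$ (from $a,d,f\in K^*$) is what makes the compensating element available. What your approach buys is a conceptual explanation (all three matrices are nonderogatory with characteristic polynomial $X^3-cX^2-bfX-adf$, so $\GL_3$-conjugacy is automatic and only the passage to $E$ needs work) together with a mechanism that transfers to other $\GL$-versus-$\SL$ conjugacy questions; what the paper's buys is brevity and a conjugator written directly in the generators of $E$. For the second equivalence the two proofs essentially coincide: your $\operatorname{diag}(x,1,x^{-1})$ is the inverse of the paper's conjugator $d_{31}(x)=\operatorname{diag}(x^{-1},1,x)$, and under the paper's convention $\sigma^h=h^{-1}\sigma h$ the computations are identical. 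Note finally that your preliminary Frobenius-form discussion is motivation only; the proof stands entirely on the explicit conjugators and the determinant repair.
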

\begin{proof}
A straightforward computation shows that
\begin{equation*}
\begin{pmatrix}&&a\\d&&b\\&f&c\end{pmatrix}^{\epsilon}=\begin{pmatrix}&&d\\f&&a^{-1}bf\\&a&c\end{pmatrix}
\end{equation*}
where $\epsilon=\hat p_{32}\hat p_{31}t_{23}(-a^{-1}c)t_{13}(a^{-1}b)d_{13}(-1)\in E$. Moreover,
\begin{equation*}
\begin{pmatrix}&&a\\d&&b\\&f&c\end{pmatrix}^{d_{31}(x)}=\begin{pmatrix}&&ax^2\\dx^{-1}&&bx\\&fx^{-1}&c\end{pmatrix}.
\end{equation*}
\end{proof}

\begin{proposition}\label{propcuben=3}
Let $C$ be an $H$-class such that $\chi_C$ does not have a root in $K$. Then the following are equivalent.
\begin{enumerate}[(i)]
\item $\det(C)^3=1$. 
\item $T\subseteq CCC$.
\item $T\subseteq C^{-1}C^{-1}C^{-1}$.
\end{enumerate}
\end{proposition}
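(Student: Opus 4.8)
The plan is to dispatch $(ii)\Rightarrow(i)$ and $(i)\Leftrightarrow(iii)$ cheaply and to concentrate on $(i)\Rightarrow(ii)$. The implication $(ii)\Rightarrow(i)$ is immediate: taking determinants in $t_{12}(1)=\sigma_1\sigma_2\sigma_3$ with $\sigma_i\in C$ gives $1=\det(C)^3$. For $(i)\Leftrightarrow(iii)$ I would apply the equivalence $(i)\Leftrightarrow(ii)$ to the class $C^{-1}$: by the paragraph preceding Theorem~\ref{thmeisen}, $\chi_{C^{-1}}$ has no root in $K$ since $\chi_C$ has none, so $C^{-1}$ satisfies the hypothesis of the proposition; as $\det(C^{-1})^3=1\Leftrightarrow\det(C)^3=1$, this yields $\det(C)^3=1\Leftrightarrow T\subseteq C^{-1}C^{-1}C^{-1}$.

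For $(i)\Rightarrow(ii)$ I would argue as in Proposition~\ref{propsquaren=3}. Choose $\sigma\in C$; by Theorem~\ref{thmfro} and Lemma~\ref{lem3} we may assume $\sigma$ is in Frobenius form, and since $\chi_C$ is irreducible (a cubic with no root) $\sigma$ has a single invariant factor, so
\[\sigma=\begin{pmatrix}0&0&a\\1&0&b\\0&1&c\end{pmatrix},\qquad a=\det(\sigma),\quad a^3=1.\]
The engine is the purely formal telescoping identity, valid in any group,
\[(\sigma_0\xi)\,(\sigma_0\xi)^{\xi}\,(\sigma_0\xi)^{\xi^2}=\sigma_0^3\xi^3 .\]
Thus it suffices to produce $\sigma_0\in\GL_3(K)$ and $\xi\in E$ with $\sigma_0\xi=\sigma^{\rho}$ for some $\rho\in E$ (so that $\sigma_0\xi\in C$) and with $\sigma_0^3\xi^3\sim_E t_{12}(1)$. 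Writing $(\sigma_0^3\xi^3)^{\epsilon}=t_{12}(1)$ with $\epsilon\in E$ and conjugating the displayed identity by $\epsilon$ then exhibits $t_{12}(1)$ as the product $\sigma^{\rho\epsilon}\sigma^{\rho\xi\epsilon}\sigma^{\rho\xi^2\epsilon}$ of three $E$-conjugates of $\sigma$, all lying in $C$. Since $CCC$ is stable under $H$-conjugation and $T=t_{12}(1)^H$, this gives $T\subseteq CCC$.

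When $\operatorname{char}K\neq3$ the two conditions on $(\sigma_0,\xi)$ decouple, exactly as the conditions $\sigma_0^2\sim t_{12}(1)$ and $\xi^2=e$ did in Proposition~\ref{propsquaren=3}. One takes $\xi$ of order $3$ and $\sigma_0$ a cube root of $t_{12}(1)$ of determinant $a$; for instance
\[\sigma_0=\begin{pmatrix}1&1&0\\0&1&0\\0&0&a\end{pmatrix},\qquad \sigma_0^3=t_{12}(3)\sim_E t_{12}(1),\]
and then one chooses an explicit order-$3$ element $\xi\in E$ for which $\sigma_0\xi$ has characteristic polynomial $\chi_C$, checking directly that the conjugator $\rho$ can be taken in $E$. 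The existence of such a $\xi$ (matching the trace and the middle coefficient of $\chi_C$, the determinant being automatically $a$) is a routine computation.

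The hard part is $\operatorname{char}K=3$. Here $t_{12}(1)$ has no cube root at all: if $\sigma_0^3$ is unipotent then, all cube roots of $1$ being equal to $1$, $\sigma_0$ is unipotent, and every unipotent matrix in $\GL_3(K)$ cubes to $e$ because $N^3=0$ for a $3\times3$ nilpotent $N$ and $3=0$. Consequently one cannot take $\xi^3=e$, and the clean decoupling fails. I expect this to be the main obstacle, and I would resolve it by choosing a non-unipotent $\xi\in E$ and arranging the joint condition $\sigma_0^3\xi^3\sim_E t_{12}(1)$ together with $\sigma_0\xi\sim_E\sigma$ by an explicit matrix computation — equivalently, by writing down three $E$-conjugates of $\sigma$ whose product is $t_{12}(1)$ directly. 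Once this characteristic-$3$ construction is supplied, the telescoping identity closes $(i)\Rightarrow(ii)$ uniformly.
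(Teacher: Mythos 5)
Your global skeleton is fine: $(ii)\Rightarrow(i)$ by determinants, $(i)\Leftrightarrow(iii)$ via the reciprocal-polynomial remark before Theorem~\ref{thmeisen}, and the telescoping identity $(\sigma_0\xi)(\sigma_0\xi)^{\xi}(\sigma_0\xi)^{\xi^2}=\sigma_0^3\xi^3$ is correct and is a legitimate engine (it is the cube analogue of the computation in Proposition~\ref{propsquaren=3}). But your $(i)\Rightarrow(ii)$ has two genuine gaps. First, even for $\operatorname{char}K\neq 3$ you only \emph{assert} the existence of an order-$3$ element $\xi\in E$ with $\chi_{\sigma_0\xi}=\chi_C$; this is a two-parameter surjectivity claim over an arbitrary field, including small finite fields, and it is not routine --- it is exactly the explicit matrix work the paper spends several pages on. (The issue you do flag, getting the conjugator $\rho$ into $E$, is in fact the harmless part, but not for the reason you suggest: $\sim_G$ does \emph{not} imply $\sim_E$ here, since the centralizer of a matrix with irreducible characteristic polynomial is the unit group of a cubic field extension, whose determinants form only the norm subgroup of $K^*$. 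The correct fix is Lemma~\ref{lem3}: since $\chi_{\sigma_0\xi}=\chi_C$ is irreducible, $\sigma_0\xi\sim_G\sigma$, so the $H$-class $D$ of $\sigma_0\xi$ is conjugated to $C$, and $T\subseteq DDD$ transfers to $T\subseteq CCC$.) Second, and decisively, the characteristic-$3$ case is simply missing. Your own (correct) observation that $t_{12}(1)$ has no cube root in $\GL_3(K)$ when $\operatorname{char}K=3$, combined with the fact that $\xi^3=e$ forces $\xi$ unipotent in characteristic $3$ (as $(\xi-e)^3=\xi^3-e$), shows that the decoupled scheme ``$\xi^3=e$ and $\sigma_0^3\sim t_{12}(1)$'' provably cannot be repaired within its own terms; your fallback --- ``writing down three $E$-conjugates of $\sigma$ whose product is $t_{12}(1)$ directly'' --- is a promissory note for precisely the hard content of the proposition.

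For comparison, the paper never takes a cube root and its argument is uniform in the characteristic. It uses an \emph{involution} $\xi\in E$ with $\sigma_0\xi=\sigma^{d_{13}(-1)}$, so the two-fold telescoping expresses a product of \emph{two} conjugates of $\sigma$ as $(\sigma_0^2)^{\epsilon}$, a companion-type matrix with corner entry $a^2$; it then shows in three cases ($b\neq 0$; $b=0,c\neq 0$; $b=c=0$), with explicit $\epsilon',\epsilon''\in E$ built using Lemma~\ref{lemperm}, that $t_{12}(1)(\sigma^{-1})^{\epsilon'}$ is $E$-conjugate to that same matrix, which exhibits $t_{12}(1)$ as a product of exactly three $E$-conjugates of $\sigma$. (Incidentally, in characteristic $3$ the subcase $b=c=0$ cannot occur: $a^3=1$ forces $a=1$, and $X^3-1=(X-1)^3$ would contradict rootlessness.) To complete your proof you would need to supply either this kind of explicit three-factor construction in characteristic $3$ or some substitute for it, and additionally verify the coefficient-matching existence of $\xi$ in characteristic $\neq 3$.
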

\begin{proof}
We will first show  $(i)\Leftrightarrow (ii)$ and then $(i)\Leftrightarrow (iii)$.\\
The implication $(ii)\Rightarrow (i)$ is obvious since $\det(T)=1$. Suppose now that $\det(C)^3=1$. Choose a $\sigma\in C$. By Lemma \ref{lem3} we may assume that $\sigma$ is in Frobenius form. Since $\chi_C$ does not have a root in $K$, it is irreducible (because $n=3$). Hence the matrix $\sigma$ has only one invariant factor and therefore 
\[\sigma=\begin{pmatrix}&&a\\1&&b\\&1&c\end{pmatrix}\]
for some $a,b,c\in K$.
Note that $a=\det(\sigma)$ and hence $a^3=1$.

\begin{enumerate}[{\bf Case} 1] 

\item Suppose that $b\neq 0$. Set
\[\sigma_0=\begin{pmatrix}&&a\\1&&\\&1&\end{pmatrix}.\]
Then
\[(\sigma_0^2)^{\hat p_{32}}=\begin{pmatrix}&&-a\\1&&\\&-a&\end{pmatrix}.\]
It follows from Lemma \ref{lemperm} that there is an $\epsilon\in E$ such that
\begin{equation}
(\sigma_0^2)^{\epsilon}=\begin{pmatrix}&&a^2\\1&&\\&1&\end{pmatrix}.
\end{equation}
Set 
\[\xi=\begin{pmatrix}-1&&-b\\&-1&c\\&&1\end{pmatrix}.\]
Clearly $\xi=t_{13}(-b)t_{23}(c)d_{12}(-1)=d_{12}(-1)t_{13}(b)t_{23}(-c)\in E$ and
\begin{equation}
\xi^2=e.
\end{equation}
One checks easily that 
\begin{equation}
\sigma_0\xi=\sigma^{d_{13}(-1)}.
\end{equation}
Clearly 
\begin{align}
&\sigma^{d_{13}(-1)\epsilon}\sigma^{d_{13}(-1)\epsilon\xi^\epsilon}\notag\\
\overset{(13)}{=}~&(\sigma_0\xi)^{\epsilon}(\sigma_0\xi)^{\epsilon\xi^\epsilon}\notag\\
=~&\sigma_0^{\epsilon}\xi^{\epsilon}\xi^{-\epsilon}\sigma_0^{\epsilon}\xi^{\epsilon}\xi^\epsilon\notag\\
\overset{(12)}{=}~&\sigma_0^{\epsilon}\sigma_0^{\epsilon}\notag\\
\overset{(11)}{=}~&\begin{pmatrix}&&a^2\\1&&\\&1&\end{pmatrix}.
\end{align}
Our next to goal is to show that there are $\epsilon',\epsilon''\in E$ such that 
\[(t_{12}(1)(\sigma^{-1})^{\epsilon'})^{\epsilon''}=\begin{pmatrix}&&a^2\\1&&\\&1&\end{pmatrix},\]
which will finish Case 1 in view of (14).
Clearly 
\[\sigma^{-1}=\begin{pmatrix}-ba^{-1}&1&\\-ca^{-1}&&1\\a^{-1}&&\end{pmatrix}.\]
A straightforward computation shows that 
\[(\sigma^{-1})^{\epsilon'}=\begin{pmatrix}-ba^{-1}+b^{-1}c&&-b\\ba^{-1}&&\\b^{-3}c^2&-b^{-2}&-b^{-1}c\end{pmatrix}\]
where $\epsilon'=\hat p_{23}t_{31}(b^{-1}c)d_{32}(-b)\in E$. It follows that
\[(t_{12}(1)(\sigma^{-1})^{\epsilon'})^{t_{31}(b^{-2}c)}=\begin{pmatrix}&&-b\\ba^{-1}&&\\&-b^{-2}&\end{pmatrix}\]
whence, by Lemma \ref{lemperm}, there is an $\epsilon''\in E$ such that 
\begin{equation}
(t_{12}(1)(\sigma^{-1})^{\epsilon'})^{\epsilon''}=\begin{pmatrix}&&a^2\\1&&\\&1&\end{pmatrix}.
\end{equation}
By (14) and (15) we have 
\[\sigma^{d_{13}(-1)\epsilon(\epsilon'')^{-1}}\sigma^{d_{13}(-1)\epsilon\xi^\epsilon(\epsilon'')^{-1}}\sigma^{\epsilon'}=t_{12}(1).\]
Thus $T\subseteq CCC$.

\item Suppose that $b=0$ and $c\neq 0$. Let $\sigma_0$, $\epsilon$ and $\xi$ be as in Case 1. We will show that there are $\epsilon',\epsilon''\in E$ such that 
\[(t_{12}(1)(\sigma^{-1})^{\epsilon'})^{\epsilon''}=\begin{pmatrix}&&a^2\\1&&\\&1&\end{pmatrix},\]
which will finish Case 2 in view of (14).
Clearly 
\[\sigma^{-1}=\begin{pmatrix}&1&\\-ca^{-1}&&1\\a^{-1}&&\end{pmatrix}\]
and hence 
\[(\sigma^{-1})^{\epsilon'}=\begin{pmatrix}&c^{-1}&\\&&c^2\\c^{-1}a^{-1}&-c^{-1}a^{-1}&\end{pmatrix}\]
where $\epsilon'=t_{31}(ca^{-1})d_{32}(c)\in E$. It follows that
\[(t_{12}(1)(\sigma^{-1})^{\epsilon'})^{t_{12}(1)\hat p_{23}}=\begin{pmatrix}&&c^{-1}\\-c^{-1}a^{-1}&&\\&-c^{2}&\end{pmatrix}\]
whence, by Lemma \ref{lemperm}, there is an $\epsilon''\in E$ such that 
\begin{equation}
(t_{12}(1)(\sigma^{-1})^{\epsilon'})^{\epsilon''}=\begin{pmatrix}&&a^2\\1&&\\&1&\end{pmatrix}.
\end{equation}
By (14) and (16) we have 
\[\sigma^{d_{13}(-1)\epsilon(\epsilon'')^{-1}}\sigma^{d_{13}(-1)\epsilon\xi^\epsilon(\epsilon'')^{-1}}\sigma^{\epsilon'}=t_{12}(1).\]
Thus $T\subseteq CCC$.

\item Suppose that $b=c=0$. Set
\[\sigma_0=\begin{pmatrix}&&a\\1&&1\\&1&\end{pmatrix}.\]
Then
\[(\sigma_0^2)^{\hat p_{32}t_{23}(-a^{-1})}=\begin{pmatrix}&&-a\\1&&a^{-1}\\&-a&2\end{pmatrix}.\]
It follows from Lemma \ref{lemperm} that there is an $\epsilon\in E$ such that
\begin{equation}
(\sigma_0^2)^{\epsilon}=\begin{pmatrix}&&a^2\\1&&-1\\&1&2\end{pmatrix}.
\end{equation}
Set 
\[\xi=\begin{pmatrix}-1&&-1\\&-1&\\&&1\end{pmatrix}.\]
Clearly $\xi=t_{13}(-1)d_{12}(-1)=d_{12}(-1)t_{13}(1)\in E$ and
\begin{equation}
\xi^2=e.
\end{equation}
One checks easily that 
\begin{equation}
\sigma_0\xi=\sigma^{d_{13}(-1)}.
\end{equation}
Clearly 
\begin{align}
&\sigma^{d_{13}(-1)\epsilon}\sigma^{d_{13}(-1)\epsilon\xi^\epsilon}\notag\\
\overset{(19)}{=}~&(\sigma_0\xi)^{\epsilon}(\sigma_0\xi)^{\epsilon\xi^\epsilon}\notag\\
=~&\sigma_0^{\epsilon}\xi^{\epsilon}\xi^{-\epsilon}\sigma_0^{\epsilon}\xi^{\epsilon}\xi^\epsilon\notag\\
\overset{(18)}{=}~&\sigma_0^{\epsilon}\sigma_0^{\epsilon}\notag\\
\overset{(17)}{=}~&\begin{pmatrix}&&a^2\\1&&-1\\&1&2\end{pmatrix}.
\end{align}
Our next to goal is to show that there are $\epsilon',\epsilon''\in E$ such that 
\[(t_{12}(1)(\sigma^{-1})^{\epsilon'})^{\epsilon''}=\begin{pmatrix}&&a^2\\1&&-1\\&1&2\end{pmatrix},\]
which will finish Case 3 in view of (20).
Clearly 
\[\sigma^{-1}=\begin{pmatrix}&1&\\&&1\\a^{-1}&&\end{pmatrix}\]
and hence
\[(\sigma^{-1})^{\epsilon'}=\begin{pmatrix}&-a^{-1}&2a^{-2}\\2&&-a^{-1}\\a&&\end{pmatrix}\]
where $\epsilon'=d_{13}(-a)t_{23}(-2a^{-1})\in E$. It follows that
\[(t_{12}(1)(\sigma^{-1})^{\epsilon'})^{t_{23}(2a^{-1})t_{13}(2a^{-1})\hat p_{21}}=\begin{pmatrix}&&-a^{-1}\\a^{-1}&&a^{-1}\\&-a&2\end{pmatrix}\]
whence, by Lemma \ref{lemperm}, there is an $\epsilon''\in E$ such that 
\begin{equation}
(t_{12}(1)(\sigma^{-1})^{\epsilon'})^{\epsilon''}=\begin{pmatrix}&&a^2\\1&&-1\\&1&2\end{pmatrix}.
\end{equation}
By (20) and (21) we have 
\[\sigma^{d_{13}(-1)\epsilon(\epsilon'')^{-1}}\sigma^{d_{13}(-1)\epsilon\xi^\epsilon(\epsilon'')^{-1}}\sigma^{\epsilon'}=t_{12}(1).\]
Thus $T\subseteq CCC$.

\end{enumerate}
We have shown $(i)\Leftrightarrow (ii)$. Applying the equivalence $(i)\Leftrightarrow (ii)$ to $C^{-1}$ we obtain 
\[\det(C^{-1})^3=1\Leftrightarrow T\subseteq C^{-1}C^{-1}C^{-1}.\]
But clearly
\[\det(C^{-1})^3=1\Leftrightarrow \det(C)^3=1.\]
Thus we have shown $(i)\Leftrightarrow (iii)$.
\end{proof}

The theorem below follows from Lemma \ref{lemm=1} and Propositions \ref{propfour}, \ref{proprootn=3}, \ref{propsquaren=3}, \ref{propcuben=3}.

\begin{theorem}\label{thmn=3}
Let $C$ be a noncentral $H$-class. Then the following holds.
\begin{enumerate}[(i)]
\item If $C=T$, then $m(C)=1$.
\item If $C\neq T$ and $\chi_C$ has a root in $K$, then $m(C)=2$. In this case $T\subseteq CC^{-1}$.
\item If $C\neq T$, $\chi_C$ has no root in $K$ and $\det(C)^2=1$, then $m(C)=2$. In this case $T\subseteq CC$.
\item If $C\neq T$, $\chi_C$ has no root in $K$, $\det(C)^2\neq 1$ and $\det(C)^3=1$, then $m(C)=3$. In this case $T\subseteq CCC$.
\item If $C\neq T$, $\chi_C$ has no root in $K$, $\det(C)^2\neq 1$ and $\det(C)^3\neq 1$, then $m(C)=4$. In this case $T\subseteq CC^{-1}CC^{-1}$.
\end{enumerate}
\end{theorem}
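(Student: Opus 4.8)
The plan is to prove the five statements by a single case analysis, assembling the equivalences of the four preceding propositions together with Lemma \ref{lemm=1}; no new matrix computations are needed, since all the substantive content has already been done. First I would record that the five hypotheses are mutually exclusive and exhaustive: every noncentral $H$-class either equals $T$ or not; if $C\neq T$ then $\chi_C$ either has a root in $K$ or not; if it has no root then either $\det(C)^2=1$ or $\det(C)^2\neq 1$; and in the latter situation either $\det(C)^3=1$ or $\det(C)^3\neq 1$. Thus exactly one of (i)--(v) applies to each $C$, and it suffices to compute $m(C)$ under each hypothesis separately.

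For the upper bounds I would exhibit in each case a product of the claimed length that contains $T$. Case (i) is immediate from Lemma \ref{lemm=1}. In case (ii), $\chi_C$ has a root, so $T\subseteq CC^{-1}$ by Proposition \ref{proprootn=3}, giving $m(C)\leq 2$. In case (iii), $\det(C)^2=1$ with $\chi_C$ rootless, so $T\subseteq CC$ by Proposition \ref{propsquaren=3}, again $m(C)\leq 2$. In case (iv), $\det(C)^3=1$, so $T\subseteq CCC$ by Proposition \ref{propcuben=3}, giving $m(C)\leq 3$. In case (v) the universal bound $T\subseteq CC^{-1}CC^{-1}$ from Proposition \ref{propfour} gives $m(C)\leq 4$.

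The remaining work, and the only place demanding care, is the matching lower bounds. For $C\neq T$, Lemma \ref{lemm=1} already gives $m(C)>1$, which settles (ii) and (iii). For (iv) I must rule out all four length-two products: $T\not\subseteq CC^{-1}$ and $T\not\subseteq C^{-1}C$ because $\chi_C$ has no root (Proposition \ref{proprootn=3}), while $T\not\subseteq CC$ and $T\not\subseteq C^{-1}C^{-1}$ because $\det(C)^2\neq 1$ (Proposition \ref{propsquaren=3}); hence $m(C)>2$, so $m(C)=3$. For (v) I must rule out all products of length at most three. The key observation is that every element of $T$ has determinant $1$, whereas the matrices in a product $C^{i_1}\cdots C^{i_k}$ all have determinant $\det(C)^{i_1+\cdots+i_k}$. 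For $k\leq 3$ the exponent $i_1+\cdots+i_k$ lies in $\{-3,-2,-1,0,1,2,3\}$, and the value $0$ occurs only for the two products $CC^{-1}$ and $C^{-1}C$, which are excluded by the no-root hypothesis via Proposition \ref{proprootn=3}. For every nonzero exponent $e$ in that range the hypotheses $\det(C)^2\neq 1$ and $\det(C)^3\neq 1$ force $\det(C)^e\neq 1$ (the cases $e=\pm 1,\pm 2$ reduce to $\det(C)^2\neq 1$ and $e=\pm 3$ to $\det(C)^3\neq 1$), so all remaining products are excluded by determinant. Therefore $m(C)>3$, and with Proposition \ref{propfour} we conclude $m(C)=4$; equivalently, this case falls exactly under the hypotheses of Theorem \ref{thmeisen}.

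I do not anticipate a genuine obstacle, since the group-theoretic input is entirely contained in Propositions \ref{proprootn=3}--\ref{propcuben=3} and Lemma \ref{lemm=1}. The one point that warrants attention is the bookkeeping in the lower bound for (v): one must verify that no sign pattern of length three slips through, and the determinant-exponent argument together with the single root exclusion handles this cleanly, so the proof reduces to the verification above.
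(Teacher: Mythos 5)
Your proposal is correct and is exactly the paper's intended argument: the paper derives Theorem \ref{thmn=3} in one line from Lemma \ref{lemm=1} and Propositions \ref{propfour}, \ref{proprootn=3}, \ref{propsquaren=3}, \ref{propcuben=3}, and your write-up merely makes the case bookkeeping explicit. Your careful note in case (v) that the length-three sign patterns have exponent sum $\pm 1$ or $\pm 3$ (so that $\det(C)^2\neq 1$, which forces $\det(C)\neq 1$, is also needed alongside $\det(C)^3\neq 1$) is a welcome precision over the paper's terser phrasing in the proof of Theorem \ref{thmeisen}.
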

\newpage

\begin{example}
Suppose that $K=\mathbb{F}_2$ or $K=\mathbb{F}_3$. Then $m(C)\leq 2$ for any noncentral $H$-class $C$ since $a^2=1$ for any $a\in K^*$.
\end{example}

\begin{example}
Suppose that $K=\mathbb{F}_5$. Let $C$ be the $H$-class of $[X^3-X+2]$. Then $C\neq T$ by Lemma \ref{lemfroel}, $\chi_C=X^3-X+2$ has no root in $K$, $\det(C)^2=-1\neq 1$ and $\det(C)^3=2\neq 1$. Hence $m(C)=4$.
\end{example}

\begin{example}
Suppose that $K=\mathbb{Q}$. Let $C$ be the $H$-class of $[X^3-2]$. Then $C\neq T$ by Lemma \ref{lemfroel}, $\chi_C=X^3-2$ has no root in $K$, $\det(C)^2=4\neq 1$ and $\det(C)^3=8\neq 1$. Hence $m(C)=4$.
\end{example}

\begin{example}
Suppose that $K=\mathbb{R}$. Then $m(C)\leq 2$ for any noncentral $H$-class $C$ since any cubic polynomial with real coefficients has at least one real root.
\end{example}

\section{Products of conjugacy classes in $\GL_\infty(K)$}
Set $G:=\GL_\infty(K)$ and $E:=\E_\infty(K)$. $H$ denotes a subgroup of $G$ containing $E$, and $T$ denotes the $H$-class of $[t_{2,1,2}(1)]_\infty$. We will determine $m(C)$ for any noncentral $H$-class $C$ ($m(C)$ is defined as in Section 1). Note that the center of $G$ consists only of $[e]_\infty$. Hence there is only one central $H$-class.

Our first goal is to define the Frobenius form of an element of $G$. 

\begin{lemma}\label{lemfrostable}
Let $n\in \N$ and $\sigma\in \GL_n(K)$. If $F(\sigma)=[P_1]\oplus\dots\oplus[P_r]$, then \[F(1\oplus \sigma)=[P_1]\oplus\dots\oplus[P_{i-1}]\oplus [P_{i}(X-1)]\oplus[P_{i+1}]\oplus\dots\oplus[P_r]\]
where $i=\min\{j\in\{0,\dots,r\}\mid P_j(X-1)\textnormal{ divides }P_{j+1}\}$. Here we set $P_0:=1$ and $P_{r+1}:=P_r(X-1)$.
\end{lemma}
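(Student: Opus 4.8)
The plan is to pass to elementary divisors, where adjoining the new block is transparent, and then to reassemble the invariant factors. First I would invoke Theorem~\ref{thmfro} to replace $\sigma$ by its Frobenius form, so that $1\oplus\sigma\sim[X-1]\oplus[P_1]\oplus\cdots\oplus[P_r]$, the extra summand $[X-1]$ being the $1\times1$ block coming from the adjoined entry $1$. As similar matrices share their invariant factors, it suffices to compute $F$ of the right-hand side. By Theorem~\ref{thmjor} and the uniqueness of the generalised Jordan form, the system of elementary divisors of a direct sum is the disjoint union of the systems of its summands; since $[X-1]=J((X-1)^1)$ has the single elementary divisor $X-1$, the elementary divisors of $1\oplus\sigma$ are exactly those of $\sigma$ together with one further copy of $X-1$.

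Next I would reconstruct the invariant factors from the elementary divisors, the passage inverse to the one recalled in Section~2: for each irreducible monic $P$, list the exponents of $P$ occurring in the elementary divisors that are powers of $P$, sort them increasingly and pad with zeros, and then the exponent of $P$ in the $j$-th invariant factor is the $j$-th entry of this list. (Over the PID $K[X]$ this is just the statement that the Smith normal form of a diagonal matrix is obtained by sorting the orders of vanishing at each prime.) Passing from $\sigma$ to $1\oplus\sigma$ alters this data only at the prime $X-1$: one inserts a single extra exponent $1$ into the sorted list of $X-1$-exponents of $P_1,\ldots,P_r$, while at every other prime, and hence in the corresponding part of every invariant factor, nothing changes.

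It then remains to locate the inserted exponent. Write $w_j$ for the exponent of $X-1$ in $P_j$; the chain $P_1\mid\cdots\mid P_r$ forces $0\le w_1\le\cdots\le w_r$, so the new exponent $1$ is slotted in immediately after the initial run of zeros, and the invariant factor that acquires the extra factor $X-1$ is the last one with $w_j=0$. The key bookkeeping step, which I expect to be the only real point of the argument, is to match this position with the index $i$ in the statement. For $P\neq X-1$ the divisibility $P_j(X-1)\mid P_{j+1}$ holds automatically (it is implied by $P_j\mid P_{j+1}$), while at $P=X-1$ it holds precisely when $w_{j+1}>w_j$; hence, using the conventions $P_0=1$ and $P_{r+1}=P_r(X-1)$ so that $w_0=0$ and $w_{r+1}=w_r+1$, the smallest $j\in\{0,\ldots,r\}$ with $P_j(X-1)\mid P_{j+1}$ is exactly the first place where $w_0\le w_1\le\cdots\le w_{r+1}$ strictly increases, i.e.\ the number of vanishing $w_j$. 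These conventions also dispose of the two extreme cases: $i=0$, where $X-1\mid P_1$ and a new smallest invariant factor $X-1$ is created, and $i=r$, where $X-1$ is attached to the top factor.

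Finally I would check that the resulting list $P_1,\ldots,P_{i-1},P_i(X-1),P_{i+1},\ldots,P_r$ is a legitimate divisibility chain, so that by the uniqueness in Theorem~\ref{thmfro} it is indeed the Frobenius form: the only links not inherited from $P_1\mid\cdots\mid P_r$ are $P_{i-1}\mid P_i(X-1)$, immediate from $P_{i-1}\mid P_i$, and $P_i(X-1)\mid P_{i+1}$, which is the defining property of $i$.
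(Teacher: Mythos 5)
Your argument is correct, but it takes a genuinely different route from the paper's. The paper never leaves the invariant-factor side: it observes that the characteristic matrix of $1\oplus\sigma$ is $(X-1)\oplus(Xe-\sigma)$, hence equivalent to the diagonal matrix with entries $X-1,1,\dots,1,P_1,\dots,P_r$, and then runs the Smith normal form algorithm of \cite{bjn} (already cited in Section~2) to sort this into the divisibility chain $P_1,\dots,P_{i-1},P_i(X-1),P_{i+1},\dots,P_r$; the identification of the index $i$ is delegated entirely to ``applying the algorithm'' and is not spelled out. You instead pass through Theorem~\ref{thmjor}: by the uniqueness of the generalised Jordan form, the elementary divisors of $1\oplus\sigma$ are those of $\sigma$ together with one extra copy of $X-1$, and you then reconstruct the invariant factors prime by prime. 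What your route buys is precisely the combinatorial content the paper leaves implicit: your observation that $P_j(X-1)\mid P_{j+1}$ amounts to $w_{j+1}>w_j$ for the $(X-1)$-adic valuations $w_j$ (the condition at every other prime being automatic from $P_j\mid P_{j+1}$) gives a transparent verification that the minimum $i$ in the statement equals the length of the initial zero run of the $w_j$, i.e.\ exactly the slot that absorbs the new factor, including the boundary cases via $w_0=0$ and $w_{r+1}=w_r+1$. What it costs is reliance on two standard facts the paper's proof does not need: uniqueness of the generalised Jordan form (stated in the paper after Theorem~\ref{thmjor}) and the inverse reconstruction of invariant factors from elementary divisors, which your parenthetical remark about the Smith form of a diagonal matrix over the PID $K[X]$ correctly justifies. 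One small point deserves an explicit sentence: when $i=0$ the number of invariant factors grows from $r$ to $r+1$, so the sorted exponent lists at all other primes must each be padded by one additional zero; your remark that ``a new smallest invariant factor $X-1$ is created'' covers this, but only implicitly.
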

\begin{proof}
Clearly the characteristic matrix of $1\oplus \sigma$ is the matrix $A=(X-1)\oplus (Xe_{n\times n}-\sigma)$. Since the invariant factors of $\sigma$ are $P_1,\dots,P_r$, the matrix $A$ is equivalent to the matrix $B=(X-1)\oplus 1\oplus\dots\oplus 1\oplus P_1\oplus\dots\oplus P_r$. By applying the algorithm described in \cite[Part V, Chapter 20, Proof of Theorem 3.2]{bjn} we get that $B$ is equivalent to the matrix $C=1\oplus\dots\oplus 1\oplus P_1\oplus\dots\oplus P_{i-1}\oplus P_i(X-1)\oplus P_{i+1}\dots\oplus P_r$. Clearly $C$ is the Smith normal form of $A$ and hence $P_1,\dots,P_{i-1},P_i(X-1),P_{i+1},\dots P_r$ are the invariant factors of $1\oplus \sigma$.
\end{proof}

For any $\sigma\in G$ there is a minimal $n_\sigma\in\N$ such that $\sigma$ has a representative in $\GL_{n_\sigma}(K)$. For any $n\geq n_\sigma$ we write $\sigma^{(n)}$ for the unique representative of $\sigma$ in $\GL_n(K)$. Proposition \ref{propfrostable} below shows that the sequence 
\[F(\sigma^{(n_\sigma)}),F(\sigma^{(n_\sigma+1)}),F(\sigma^{(n_\sigma+2)})\dots\]
eventually stabilises (up to the equivalence relation $\sim_\infty$).

\begin{proposition}\label{propfrostable}
Let $\sigma\in G$. Then there is a $s\geq n_\sigma$ such that 
$[F(\sigma^{(s)})]_\infty=[F(\sigma^{(t)}]_\infty$ for any $t\geq s$.
\end{proposition}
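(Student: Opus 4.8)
The plan is to reduce the claim to the combinatorial behaviour of the invariant factors under the transition maps, via Lemma~\ref{lemfrostable}. First I would record the elementary fact that $\sigma^{(n+1)}=1\oplus\sigma^{(n)}$ for every $n\geq n_\sigma$, so that $F(\sigma^{(n+1)})=F(1\oplus\sigma^{(n)})$ and Lemma~\ref{lemfrostable} applies at each step: passing from level $n$ to level $n+1$ multiplies exactly one invariant factor of $\sigma^{(n)}$ by $(X-1)$, inserted at the minimal position $i$ compatible with the divisibility chain.

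Next I would track the exponent of $(X-1)$ in each invariant factor. Writing the invariant factors of $\sigma^{(n)}$ as $P_j=(X-1)^{a_j}R_j$ with $(X-1)\nmid R_j$, the divisibility test ``$P_j(X-1)$ divides $P_{j+1}$'' appearing in Lemma~\ref{lemfrostable} collapses to the purely numerical condition $a_j<a_{j+1}$, since the existing chain $P_1\mid\cdots\mid P_r$ already forces $R_j\mid R_{j+1}$ and $(X-1)$ is coprime to each $R_j$. Here one sets $a_0=0$ and $a_{r+1}=a_r+1$ as in the lemma. Thus the index $i$ is the first strict ascent of the sequence $0=a_0\le a_1\le\cdots\le a_r<a_{r+1}$, and in particular $i=0$ if and only if $a_1\ge 1$, i.e.\ iff the smallest invariant factor is already divisible by $X-1$.

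The heart of the argument is to show that the regime $i=0$ is eventually reached and then never left. As long as $a_1=0$ one has $i\ge 1$, so no new invariant factor is inserted and the number $r$ of invariant factors stays constant, equal to its value $r_{n_\sigma}$ at the base level; moreover incrementing $a_i$ at the first strict ascent strictly decreases the number of leading zeros $\#\{k:a_1=\cdots=a_k=0\}$ by one. Since this count is bounded by the fixed value $r_{n_\sigma}$, after at most $r_{n_\sigma}$ steps one reaches a level $s$ with $a_1\ge 1$; and once $a_1\ge 1$, inserting $(X-1)$ at the front produces a new smallest factor equal to $X-1$, so $a_1\ge 1$ persists and $i=0$ for every subsequent step.

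Finally I would translate the $i=0$ regime back into the stable group. When $i=0$, Lemma~\ref{lemfrostable} gives $F(\sigma^{(n+1)})=[X-1]\oplus F(\sigma^{(n)})$; since the companion matrix $[X-1]$ is the $1\times 1$ identity, this is precisely $1\oplus F(\sigma^{(n)})=\phi_{n,n+1}(F(\sigma^{(n)}))$, whence $F(\sigma^{(n+1)})\sim_\infty F(\sigma^{(n)})$, that is $[F(\sigma^{(n+1)})]_\infty=[F(\sigma^{(n)})]_\infty$. Taking $s$ to be the first level with $a_1\ge 1$ then yields $[F(\sigma^{(t)})]_\infty=[F(\sigma^{(s)})]_\infty$ for all $t\ge s$, as required. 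The main obstacle is the combinatorial step, namely proving that the insertion index $i$ is forced to $0$ after finitely many stages (the ``leading zeros'' monovariant), together with the observation that the $i=0$ update is literally the padding map $\phi_{n,n+1}$, which is what actually makes the $\sim_\infty$-class constant.
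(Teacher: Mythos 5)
Your proposal is correct and follows essentially the same route as the paper: both iterate Lemma~\ref{lemfrostable}, show that the insertion index marches down to $0$ in finitely many steps (your ``leading zeros'' count of the $(X-1)$-adic exponents $a_j$ is exactly the paper's minimal index $i$, which the paper shows drops by one at each level via the divisibility relations $P_{i-1}(X-1)\mid P_i(X-1)$), and then observe that once $i=0$ each step merely prepends $[X-1]=(1)$, i.e.\ is the padding map $\phi_{n,n+1}$, so the $\sim_\infty$-class stabilises. Your reformulation of the divisibility test as $a_j<a_{j+1}$ is a clean and valid repackaging, but not a different argument.
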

\begin{proof}
If $F(\sigma^{(n_\sigma)})=[P_1]\oplus\dots\oplus[P_r]$, then by Lemma \ref{lemfrostable}, \[F(\sigma^{(n_\sigma+1)})=[P_1]\oplus\dots\oplus[P_{i-1}]\oplus [P_{i}(X-1)]\oplus[P_{i+1}]\oplus\dots\oplus[P_r]\]
where $i=\min\{j\in\{0,\dots,r\}\mid P_j(X-1)\textnormal{ divides }P_{j+1}\}$. Since $P_{i-1}(X-1)$ divides $P_i(X-1)$ but $P_j(X-1)$ does not divide $P_{j+1}$ for any $j\in\{0,\dots,i-2\}$, we get 
\[F(\sigma^{(n_\sigma+2)})=[P_1]\oplus\dots\oplus[P_{i-2}]\oplus [P_{i-1}(X-1)]\oplus [P_{i}(X-1)]\oplus[P_{i+1}]\oplus\dots\oplus[P_r],\]
again by Lemma \ref{lemfrostable}. After repeating this step a finite number of times we arrive at
\[F(\sigma^{(n_\sigma+i)})=[P_1(X-1)]\oplus\dots\oplus [P_{i}(X-1)]\oplus[P_{i+1}]\oplus\dots\oplus[P_r].\]
Hence, again by Lemma \ref{lemfrostable},
\[F(\sigma^{(n_\sigma+t)})=[X-1]\oplus\dots\oplus [X-1]\oplus[P_1(X-1)]\oplus\dots\oplus [P_{i}(X-1)]\oplus[P_{i+1}]\oplus\dots\oplus[P_r].\]
for any $t\geq i$. Since $[X-1]$ is the $1\times 1$ matrix $(1)$, the assertion of the proposition follows.
\end{proof}

Let $\sigma\in G$. By Proposition \ref{propfrostable} there is a $s\geq n_\sigma$ such that $[F(\sigma^{(s)})]_\infty=[F(\sigma^{(t)}]_\infty$ for any $t\geq s$. We define the \textit{Frobenius form} $F(\sigma)$ of $\sigma$ by $F(\sigma)=[F(\sigma^{(s)})]_\infty$. Clearly $F(\sigma)$ is well-defined. 

\begin{proposition}\label{propsimstable}
Let $\sigma,\tau\in G$. Then $\sigma\sim \tau$ iff $F(\sigma)=F(\tau)$.
\end{proposition}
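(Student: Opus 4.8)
The plan is to push everything down to a finite level $\GL_N(K)$ and there invoke the uniqueness of the Frobenius form (Theorem \ref{thmfro}): two matrices in $\GL_N(K)$ are similar iff they have the same Frobenius form. The bridge between the finite and the stable picture is an observation already visible in the proof of Proposition \ref{propfrostable}. Once $n$ is large enough that the sequence $F(\sigma^{(n)})$ has stabilised, the smallest invariant factor of $\sigma^{(n)}$ is $X-1$, so in Lemma \ref{lemfrostable} the index $i$ equals $0$, and passing from $\sigma^{(n)}$ to $1\oplus\sigma^{(n)}=\sigma^{(n+1)}$ merely prepends one more block $[X-1]=(1)$ to $F(\sigma^{(n)})$. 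Since the transition homomorphism $\phi_{n,N}$ prepends exactly $N-n$ copies of $(1)=e_{1\times 1}$, this gives the honest matrix equality $F(\sigma^{(N)})=\phi_{n,N}(F(\sigma^{(n)}))$ for every $N\geq n$ in the stable range. Consequently, once $N$ is chosen past the stabilisation points of both $\sigma$ and $\tau$, the elements $F(\sigma)=[F(\sigma^{(N)})]_\infty$ and $F(\tau)=[F(\tau^{(N)})]_\infty$ are represented by genuine $N\times N$ matrices.

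For the forward direction, suppose $\sigma\sim\tau$, say $\sigma=g\tau g^{-1}$ with $g\in G$. Writing $g=[\rho]_\infty$ and taking $N$ at least $n_\sigma,n_\tau,n_\rho$ and past both stabilisation points, the defining formula for the product in $\GL_\infty(K)$ yields the identity $\sigma^{(N)}=\rho^{(N)}\tau^{(N)}(\rho^{(N)})^{-1}$ in $\GL_N(K)$, where $\rho^{(N)}=\phi_{n_\rho,N}(\rho)$. Thus $\sigma^{(N)}\sim\tau^{(N)}$ in $\GL_N(K)$, so they have the same Frobenius form, and hence $F(\sigma)=[F(\sigma^{(N)})]_\infty=[F(\tau^{(N)})]_\infty=F(\tau)$.

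For the converse, suppose $F(\sigma)=F(\tau)$. Choosing $N$ as above, the equality of the $\sim_\infty$-classes of the two $N\times N$ matrices $F(\sigma^{(N)})$ and $F(\tau^{(N)})$ forces $F(\sigma^{(N)})=F(\tau^{(N)})$ as elements of $\Mat_N(K)$, since $\phi_{N,N}$ is the identity. By Theorem \ref{thmfro} the matrices $\sigma^{(N)}$ and $\tau^{(N)}$ are then similar in $\GL_N(K)$, say $\sigma^{(N)}=\rho\tau^{(N)}\rho^{-1}$ with $\rho\in\GL_N(K)$. Passing to $\sim_\infty$-classes gives $\sigma=[\rho]_\infty\,\tau\,[\rho]_\infty^{-1}$, that is, $\sigma\sim\tau$ in $G$.

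The two finite-level implications are routine consequences of Theorem \ref{thmfro}; the step that needs care, and carries the real content, is the matrix equality $F(\sigma^{(N)})=\phi_{n,N}(F(\sigma^{(n)}))$ in the stable range. A priori Proposition \ref{propfrostable} only asserts stabilisation up to $\sim_\infty$, so the point is to verify that the transition map acts on the stabilised Frobenius form exactly by prepending $[X-1]$ blocks. This is what lets one compare the two stable forms as honest matrices of a common size, transfer equality of stable Frobenius forms to equality of finite ones, and thereby invoke similarity in $\GL_N(K)$.
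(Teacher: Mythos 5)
Your proposal is correct and takes essentially the same route as the paper: reduce similarity in $\GL_\infty(K)$ to similarity of representatives at a common finite level, apply the uniqueness of the Frobenius form (Theorem \ref{thmfro}) there, and compare the stable forms via the observation that past stabilisation the transition maps act on Frobenius forms by prepending $[X-1]=(1)$ blocks. You merely make explicit (via the matrix equality $F(\sigma^{(N)})=\phi_{n,N}(F(\sigma^{(n)}))$) what the paper's chain of ``one checks easily'' equivalences leaves implicit.
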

\begin{proof}
Choose an $s\geq n_\sigma,n_\tau$ such that $[F(\sigma^{(s)})]_\infty=[F(\sigma^{(t)})]_\infty$ and $[F(\tau^{(s)})]_\infty=[F(\tau^{(t)})]_\infty$ for any $t\geq s$. Then $F(\sigma)=[F(\sigma^{(s)})]_\infty$ and $F(\tau)=[F(\tau^{(s)})]_\infty$. One checks easily that
\begin{align*}
&\sigma\sim \tau\\
\Leftrightarrow&\exists t\geq s:\sigma^{(t)}\sim\tau^{(t)}\text{ in }\GL_t(K)\\
\Leftrightarrow&\exists t\geq s:F(\sigma^{(t)})=F(\tau^{(t)})\\
\Leftrightarrow&F(\sigma^{(s)})=F(\tau^{(s)})\\
\Leftrightarrow&F(\sigma)=F(\tau).
\end{align*}
\end{proof}

By Proposition \ref{propsimstable} we can define the Frobenius form $F(C)$ of an $H$-class $C$ in the obvious way. Below we compute $F(T)$. Note that $T=\{g\in G\mid F(\sigma)=F(T)\}$ by Lemma \ref{lem2}.

\begin{lemma}
$F(T)=[[(X-1)^2]]_\infty$.
\end{lemma}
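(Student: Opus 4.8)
The plan is to unwind the definition of the stable Frobenius form directly. By definition $F(T)=F([t_{2,1,2}(1)]_\infty)$, so I would examine the representatives of $\sigma:=[t_{2,1,2}(1)]_\infty$ in the finite groups $\GL_n(K)$ and determine where the sequence of their finite Frobenius forms stabilises under $\sim_\infty$. The minimal representative lives in $\GL_2(K)$, namely $\sigma^{(2)}=t_{12}(1)$, so that $n_\sigma=2$; for every $n\geq 2$ the representative is $\sigma^{(n)}=\phi_{2,n}(t_{12}(1))=e_{(n-2)\times(n-2)}\oplus t_{12}(1)$, which is a nontrivial elementary transvection in $\GL_n(K)$.

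First I would compute $F(\sigma^{(2)})$. The $2\times 2$ matrix $t_{12}(1)$ has characteristic polynomial $(X-1)^2$, and its minimal polynomial is also $(X-1)^2$ (since $t_{12}(1)-e\neq 0$ but $(t_{12}(1)-e)^2=0$); hence it has a single invariant factor and $F(\sigma^{(2)})=[(X-1)^2]$. Next, since $\sigma^{(n)}$ is a nontrivial transvection, Lemma \ref{lem1} gives $\sigma^{(n)}\sim_E t_{12}(1)$ in $\GL_n(K)$, so the finite-case computation of Lemma \ref{lemfroel} applies verbatim and yields, for every $n\geq 2$,
\[
F(\sigma^{(n)})=\underbrace{[X-1]\oplus\dots\oplus[X-1]}_{n-2}\oplus[(X-1)^2].
\]
(Alternatively one may start from $F(\sigma^{(2)})=[(X-1)^2]$ and iterate Lemma \ref{lemfrostable}, reproducing the computation in the proof of Proposition \ref{propfrostable}.)

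The key observation is then that $[X-1]$ is the $1\times 1$ companion matrix $(1)$, i.e.\ an identity block, so the displayed matrix is exactly $e_{(n-2)\times(n-2)}\oplus[(X-1)^2]=\phi_{2,n}([(X-1)^2])$. Consequently $[F(\sigma^{(n)})]_\infty=[[(X-1)^2]]_\infty$ for every $n\geq 2$, so the sequence $[F(\sigma^{(n_\sigma)})]_\infty,[F(\sigma^{(n_\sigma+1)})]_\infty,\dots$ is in fact constant; it trivially stabilises and we conclude $F(T)=F(\sigma)=[[(X-1)^2]]_\infty$.

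There is no substantial obstacle here: the whole statement reduces to the bookkeeping that passing to the $\sim_\infty$-class absorbs the leading identity (that is, $[X-1]$) blocks, collapsing the finite Frobenius form to the single block $[(X-1)^2]$. The only point worth stating cleanly is that each $\sigma^{(n)}$ is genuinely the transvection stabilised by identity blocks, so that Lemma \ref{lemfroel} may be invoked directly.
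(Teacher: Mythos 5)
Your proof is correct and takes essentially the same route as the paper: compute $F(t_{12}(1))=[(X-1)^2]$ in $\GL_2(K)$, observe that the higher representatives have Frobenius form $[X-1]\oplus\dots\oplus[X-1]\oplus[(X-1)^2]$, and note that the leading $[X-1]=(1)$ blocks are absorbed when passing to the $\sim_\infty$-class. The only cosmetic difference is that your main line invokes Lemma \ref{lem1} together with Lemma \ref{lemfroel} where the paper iterates Lemma \ref{lemfrostable}, an alternative you yourself point out reproduces the same computation.
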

\begin{proof}
It is an easy exercise to show that 
\[F(t_{2,1,2}(1))=\left(\begin{array}{cc}&-1\\1&2\end{array}\right)=[(X-1)^2].\]
Hence $F(e_{n\times n}\oplus t_{2,1,2}(1))=[X-1]\oplus\dots\oplus[X-1]\oplus [(X-1)^2]$ for any $n\in\N$, by Lemma \ref{lemfrostable}. The assertion of the lemma follows.
\end{proof}

\begin{lemma}\label{lemm=1stab}
Let $C$ be a noncentral $H$-class. Then $m(C)=1$ iff $C=T$.
\end{lemma}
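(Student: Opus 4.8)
The plan is to prove Lemma \ref{lemm=1stab} by the same elementary argument used for the finite-dimensional analogue (Lemma \ref{lemm=1}), since the statement and its proof should transfer verbatim to the stable setting. The claim is that $m(C)=1$ if and only if $C=T$, and the content of $m(C)=1$ is simply that $T\subseteq C^{i_1}$ for some $i_1\in\{\pm 1\}$, i.e. either $T\subseteq C$ or $T\subseteq C^{-1}$. Since $T$ and $C^{\pm 1}$ are all single $H$-classes, the inclusion $T\subseteq C^{i_1}$ forces the equality $T=C^{i_1}$.

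First I would handle the easy direction. If $C=T$, then taking $i_1=1$ gives $T\subseteq C^{1}=C$, so $m(C)=1$ by definition (and $m(C)\geq 1$ always, as $m(C)\in\N$). For the converse, suppose $m(C)=1$, so that $T\subseteq C^{i_1}$ for some $i_1\in\{\pm 1\}$. Because both sides are $H$-classes this yields $T=C^{i_1}$, hence either $C=T$ or $C=T^{-1}$. The key observation that closes the argument is that $T=T^{-1}$ in the stable group as well, so in either case $C=T$.

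The one point that genuinely needs justification is the identity $T=T^{-1}$. In the finite case this was recorded as a consequence of Lemma \ref{lem1}. In the stable setting I would instead note that $T$ is characterised by its Frobenius form $F(T)=[[(X-1)^2]]_\infty$, which is computed in the lemma immediately preceding this one, together with the fact (Proposition \ref{propsimstable}) that an $H$-class is determined by its Frobenius form. Since $t_{2,1,2}(1)^{-1}=t_{2,1,2}(-1)$ has the same Frobenius form $[(X-1)^2]$ as $t_{2,1,2}(1)$ — the inverse of a single Jordan block of eigenvalue $1$ is again unipotent with a single Jordan block of the same size — we get $T^{-1}=T$. Thus $C=T^{i_1}=T$, completing the proof.

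I do not expect any real obstacle here: the argument is purely formal once $T=T^{-1}$ is in hand, and that equality follows directly from the Frobenius-form computation available in the excerpt. The only care required is to phrase the self-inverse property of $T$ using the stable machinery (Frobenius form and Proposition \ref{propsimstable}) rather than Lemma \ref{lem1}, which was stated for the finite general linear group.
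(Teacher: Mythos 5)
Your proof is correct, and its skeleton is the paper's: the entire proof in the paper reads ``Clear since $T=T^{-1}$ by Lemma \ref{lem1}.'' Where you genuinely differ is in how the key identity $T=T^{-1}$ is established. The paper just cites Lemma \ref{lem1}, and your scruple about it being stated only for finite $n$ is unnecessary: the conjugation $t_{12}(1)^{\epsilon}=t_{12}(-1)$ takes place inside some $\E_n(K)$, whose image lies in $\E_\infty(K)\leq H$, so the finite-level statement transfers verbatim to the stable group. Your substitute route via Frobenius forms also works, but carries a citation slip you should fix: Proposition \ref{propsimstable} says a \emph{$G$-class} (similarity class) is determined by its Frobenius form, not an $H$-class --- in general $H$-classes are finer, so ``an $H$-class is determined by its Frobenius form'' is not what that proposition asserts. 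What rescues your argument is the remark recorded in Section 5 immediately after the computation of $F(T)$, namely $T=\{g\in G\mid F(g)=F(T)\}$ (the stable analogue of Lemma \ref{lem2}), which says that precisely for the transvection class the $H$-class and the $G$-class coincide. With that note in hand, your observation $F\bigl(t_{2,1,2}(1)^{-1}\bigr)=[(X-1)^2]=F\bigl(t_{2,1,2}(1)\bigr)$ gives $t_{2,1,2}(1)^{-1}\in T$, hence $T^{-1}=T$, and the formal argument closes exactly as you wrote. In short: the paper's route is lighter (one elementary conjugation), yours buys nothing extra here but is valid once you cite the note $T=\{g\in G\mid F(g)=F(T)\}$ rather than Proposition \ref{propsimstable} alone.
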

\begin{proof}
Clear since $T=T^{-1}$ by Lemma \ref{lem1}.
\end{proof}

\begin{lemma}\label{lemrootstab}
Let $C$ be a noncentral $H$-class. Then $T\subseteq CC^{-1}$.
\end{lemma}
\begin{proof}
Choose a $\sigma\in C$ and an $n>n_\sigma$. Then clearly $\sigma^{(n)}$ is noncentral and  $\chi_{\sigma^{(n)}}$ has a linear factor. By Proposition \ref{proproot} we get $t_{n,n-1,n}(1)\in C^{(n)}(C^{(n)})^{-1}$ where $C^{(n)}$ is the $E_n(K)$-class of $\sigma^{(n)}$. It follows that $[t_{2,1,2}(1)]_\infty\in CC^{-1}$ and thus $T\subseteq CC^{-1}$.
\end{proof}

Theorem \ref{thmstab} below follows directly from Lemmas \ref{lemm=1stab} and \ref{lemrootstab}.
\begin{theorem}\label{thmstab}
Let $C$ be a noncentral $H$-class. Then $m(C)=1$ if $C=T$ respectively $m(C)=2$ if $C\neq T$.
\end{theorem}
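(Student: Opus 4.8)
The final statement is Theorem \ref{thmstab}, which asserts that for a noncentral $H$-class $C$ in the stable group, $m(C)=1$ iff $C=T$ and $m(C)=2$ otherwise. The plan is to reduce both claims to lemmas that have just been established, so the proof should be essentially a two-line assembly.

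First I would dispose of the case $C=T$ using Lemma \ref{lemm=1stab}, which gives $m(C)=1 \iff C=T$ directly. This handles the first half of the dichotomy. For the remaining case $C\neq T$, I need to show $m(C)=2$, i.e. that $m(C)>1$ and $m(C)\leq 2$. The lower bound $m(C)>1$ is immediate from Lemma \ref{lemm=1stab} again: since $C\neq T$, we cannot have $m(C)=1$. The upper bound $m(C)\leq 2$ is exactly the content of Lemma \ref{lemrootstab}, which states $T\subseteq CC^{-1}$, so that a product of two (signed) copies of $C$ already contains $T$.

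The key conceptual point I would emphasize is why the stable case is so much simpler than the finite case: in $\GL_\infty(K)$ every noncentral class has a representative $\sigma^{(n)}$ whose characteristic polynomial acquires a linear factor $(X-1)$ from the padding by identity blocks. This is precisely what makes Proposition \ref{proproot} applicable unconditionally, whereas in the finite-dimensional setting $\chi_C$ might be irreducible and force $m(C)$ up to $3$ or $4$. Thus the obstruction that drove Section 3 and Section 4 — a characteristic polynomial with no root in $K$ — simply evaporates in the limit.

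I do not anticipate any genuine obstacle here, since all the analytic work has been pushed into Lemmas \ref{lemm=1stab} and \ref{lemrootstab}. The only thing to be careful about is the logical bookkeeping: confirming that ``$m(C)>1$ and $m(C)\leq 2$'' together yield $m(C)=2$, and that the definition of $m(C)$ in the stable group (with $T$ the $H$-class of $[t_{2,1,2}(1)]_\infty$) is the one these lemmas refer to. The proof I would write is therefore: if $C=T$ then $m(C)=1$ by Lemma \ref{lemm=1stab}; otherwise $m(C)\neq 1$ by the same lemma while $T\subseteq CC^{-1}$ by Lemma \ref{lemrootstab}, whence $m(C)=2$.
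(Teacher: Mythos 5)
Your proof is correct and matches the paper exactly: the paper also derives Theorem \ref{thmstab} directly from Lemma \ref{lemm=1stab} (for the case $C=T$ and the lower bound $m(C)>1$) and Lemma \ref{lemrootstab} (for the upper bound $T\subseteq CC^{-1}$). Your added remark about the linear factor $(X-1)$ arising from stabilisation correctly identifies why Proposition \ref{proproot} applies unconditionally, which is precisely the mechanism inside Lemma \ref{lemrootstab}.
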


\end{document}